\documentclass[a4paper]{article}

\usepackage[left=2.2cm, right=2.2cm, top=2.5cm, bottom=2.5cm]{geometry}

\usepackage{amsfonts, amsmath, hyperref, latexsym, amssymb, amsthm, url,tikz, xcolor, bm, ifthen, dsfont}
\usepackage{setspace}
\usepackage{caption}
\numberwithin{equation}{section}

\usetikzlibrary{shapes.geometric}
\usetikzlibrary{positioning}
\newtheorem{theorem}{Theorem}[section]
\newtheorem{lemma}[theorem]{Lemma}

\newtheorem{proposition} [theorem]{Proposition}

\newtheorem{remark}[theorem]{Remark}
\newcommand{\N}{\mathcal{N}}
\renewcommand{\P}{\mathbb{P}}
\newcommand{\E}{\mathbb{E}}
\def\1{\mathds{1}}
\renewcommand{\Re}{\mathrm{Re}\,}
\newcommand{\U}[1]{\mathrm{(U}_{#1}\mathrm{)}}
\newcommand{\R}{\mathbb{R}}
\newcommand{\Ext}{\mathcal{E}}

\title{Moments of P\'olya urns balanced in expectation}
\author{Colin Desmarais\\
{\small TU Wien}\\[-0.8ex] 
{\small Institute of Discrete Mathematics and Geometry}\\[-0.8ex]
{\small Wiedner Hauptstrasse 8-10, 1040 Vienna, Austria}\\ 
{\small \tt colin.desmarais@tuwien.ac.at}}

\begin{document}

\maketitle

\begin{abstract}
In this work, recent results on the moments of balanced P\'olya urns are generalized to unbalanced urns, with the condition that the expected change in total activity at each step is constant. We also provide applications of our results to the degree distributions of random trees grown by uniform attachment with freezing and to the degree distribution of hooking networks; in both cases we prove a normal limit law with convergence of all moments.
\end{abstract}

\section{Introduction}

A generalized P\'olya urn contains several balls of different colours. At each step, a ball is selected from the urn at random proportional to its {\em activity}, a real value assigned to each ball of the same colour. The ball is then replaced with a (potentially random) set of other balls which depends on the colour of the drawn ball. The expected replacement scheme at each step is encoded in an {\em intensity matrix} $A$, defined below. As is well-studied, the eigenvalues of this matrix influence the asymptotic behaviour of the urn. Under some irreducibility assumptions, the intensity matrix $A$ admits a simple real eigenvalue $\lambda_1$ such that $\lambda_1 \geq \Re \lambda$ for all other eigenvalues $\lambda$. If $\lambda_1 \geq 2 \Re \lambda$, the urn is called {\em small} and in several cases is asymptotically normal, see \cite[Theorems 3.22 \& 3.23]{JANS:04} for the general result. It was further shown in several recent works that convergence of moments also holds in these cases if the urn is {\em balanced}; that is, if the change in the sum of the activities of the balls at each step is constant \cite{JANS:20, JANS:25, JAPO:18}; we note also the work of Pouyanne \cite{POUY:08} on the moments of balanced {\em large urns} (where $\lambda_1 < 2 \Re \lambda$ for some eigenvalue $\lambda \neq \lambda_1$ of $A$).

\medskip

Despite several works on the moments of balanced urns, similar results for unbalanced urns have remained elusive (see \cite[Problem 1.1]{JANS:20} and \cite[Problem 1.1]{JANS:25}). In this work we generalize several results to a class of unbalanced urns. Our results are stated in slightly more generality (see Section \ref{sec:results}), but the main result can be summarized as showing convergence of moments for strictly small urns (where $\lambda_1 > 2 \Re \lambda$ for all other eigenvalues of $A$) if we assume the change in the sum of the activities of the balls at each step is constant in expectation, what we call {\em balanced in expectation} (see \eqref{eq:iidBalancedDef} below). 

\medskip

To prove our results, we perform a similar decomposition of the P\'olya urn appearing in previous works, notably in \cite{BAHU:99, BAHU:05, JANS:20, JANS:25}. The idea is to rewrite the contribution of the $k$-th draw on the composition of the urn at a later time as a linear transformation of a martingale difference sequence. In the balanced case, the linear transformation is deterministic (not random), and so amenable to analysis; this property does not hold in greater generality. However, when the urn is balanced in expectation, the contribution at the $k$-th draw can be further decomposed into a part that maintains the martingale difference sequence property, and a noisy part which can be controlled, all while maintaining the deterministic linear transformation applied to the contributions (see \eqref{eq:XDecomp}).

\medskip

P\'olya urns have long been applied to classes of random increasing trees and their generalizations, and used to prove normal limit laws for their degrees; see for example \cite{HOJS:17, JANS:05, MAHM:19, MASM:92, MASS:93}. In all of the examples cited, the urns used are balanced and satisfy the conditions of \cite{JANS:20, JANS:25, JAPO:18} to guarantee the convergence of the moments as well. However, some recently introduced models of increasing trees and generalizations thereof do not grow in a `balanced-like' manner, but do grow in a way that is `balanced in expectation'; these include for example trees constructed via uniform attachment with freezing, and hooking networks. In Section \ref{sec:applications}, we prove central limit theorems and convergences of moments for the degree distributions of these models. 

\medskip

We gather relevant terminology and assumptions in Section \ref{sec:definitions}, mostly copying the summaries provided in \cite{JANS:20, JANS:25} (though with the assumption of balanced urns replaced with urns balanced in expectation). Our main results are presented in Section \ref{sec:results}; these include a general bound for moments of urns balanced in expectation, convergence of moments for strictly small urns admitting a central limit theorem, and asymptotics for the expectations of urns balanced in expectation. The main technical work of the proofs is contained in Section \ref{sec:momentBounds}, and the completions of the proofs are contained in Section \ref{sec:proofs}. Applications of our results are provided in Section \ref{sec:applications}. 

\bigskip

\noindent{\bf Acknowledgements.} The author would like to thank Svante Janson for helpful discussions in preparation of this article. 
\medskip

\noindent{\bf Funding statement.} This research was funded in whole or in part by the Austrian Science
Fund (FWF) [10.55776/PAT6732623]. For open access purposes, the author
has applied a CC BY public copyright licence to any author accepted
manuscript version arising from this submission.

\section{Definitions, notation and assumptions}\label{sec:definitions}

\noindent{\bf Some general notation.} Throughout this work, all vectors are column vectors, with $v^T$ used to denote the (row vector) transpose of $v$. We will use $\cdot$ to denote the Euclidean inner product of two vectors. 

The standard Euclidean norm for vectors is denoted $| \cdot |$, while we use $||M||$ to denote the operator norm of the matrix $M$. For a random variable or random vector $X$, the $L^p$-norm of $X$ is denoted $||X||_p := \left(\E|X|^p\right)^\frac{1}{p}$. 

We reserve the letter $q$ for the dimension of our vectors and matrices, i.e., all vectors belong to $\mathbb{F}^q$ for some field $q$, and all matrices $A$ belong to $\mathbb{F}^{q \times q}$. The letter $p$ is reserved for norms; i.e., when denoting the $L^p$-norm $|| \cdot ||_p$.

\bigskip

\noindent {\bf Generalized P\'olya urns.} We suppose there are $q \geq 2$ types (or colours) of balls, each with a non-negative activity $a_i \in \R_{\geq 0}$ and assigned a random vector $\xi_i = (\xi_{i,1}, \ldots, \xi_{i,q})^T$. The urn at time $n \geq 0$ is denoted by the non-negative vector 
\[X_n = (X_{n,1}, \ldots, X_{n,q})^T \in \R_{\geq 0}^q.\]
Set the {\em activity vector} of the urn as $a := (a_1, \ldots, a_q)^T$, and define the {\em total activity of the urn} at time $n$ to be the value
\begin{equation}\label{eq:SDef}
S_n := a \cdot X_n = \sum_{i=1}^q a_i X_{n,i}.
\end{equation}
The composition of the urn $(X_n)_{n=0}^\infty$ evolves as a discrete-time Markov process as follows. We initiate with a non-zero and non-random vector $X_0$ such that 
\[ S_0 = a \cdot X_0 > 0.\]
At each step $n \geq 0$, if $S_n > 0$ then a ball is selected from the urn where the probability of selecting a ball of type $i$ for $i=1,\ldots q$ is given by
\begin{equation}\label{eq:BallProbab}
\frac{a_iX_{n,i}}{\sum_{j=1}^q a_j X_{n,j}} = \frac{a_i X_{n,i}}{S_n},
\end{equation}
while if $S_n = 0$, then no ball is selected. We then sample the {\em replacement vector} $\Delta X_n = (\Delta X_{n,1}, \ldots, \Delta X_{n,q})$ where $\Delta X_n \sim \xi_i$ if a ball of type $i$ is selected, and $\Delta X_n = 0$ if no ball is selected. Importantly, the distribution of $\Delta X_n$ depends only on the type of the ball selected at time $n$ (if a ball is selected), independently of anything that has occurred so far. We then update the urn by setting 
\begin{equation}\label{eq:update}
X_{n+1} = X_n + \Delta X_n.
\end{equation}
Note that if the type $i$ has activity $a_i = 0$, then it is never selected (and so $\xi_i$ has no effect on the urn process). 

\bigskip 

\noindent{\bf Balanced in expectation, tenability, and extinction.} Throughout this article, we will say that an urn is {\em balanced in expectation} if 
\begin{equation}\label{eq:iidBalancedDef}
\text{for all } i=1,\ldots, q, \qquad \text{if } a_i > 0 \text{ then } \sum_{j=1}^q a_j \E \xi_{i,j} = b > 0
\end{equation}
for some constant $b$. Contrast with {\em balanced} urns, which is defined similarly but with the expectation removed in \eqref{eq:iidBalancedDef}. In this article, we are concerned with urns that are balanced in expectation, and show that many results for balanced urns hold for urns balanced in expectation.

We will say that $X_{n,i}$ is the `number of balls' of type $i$ at time $n$, though $X_{n,i}$ need not be an integer. We allow for $\xi_{i,j}$ to be negative (so balls may be subtracted from the urn), though we require the urn to be {\em tenable}, in the sense that $X_0$ and the distributions of the random vectors $\xi_1, \ldots, \xi_q$ must be defined such that the $X_n$ are non-negative at each step (and we cannot `remove' balls from the urn that do not exist). This can be accomplished for example by assuming that $\xi_1, \ldots, \xi_q$ satisfy a.s.
\[
\text{for } i \neq j, \qquad \xi_{i,j} \geq 0 \qquad \text{and} \qquad \xi_{j,i} \text{ is a multiple of } \xi_{i,i}. \]
(see for example \cite[Remark 4.2]{JANS:04}), though we do not make this assumption explicitly here. 

Define the event of {\em non-extinction up to time $n$} as
\begin{equation}\label{eq:ExtinctDef}
\Ext^c_n := \{ S_k > 0 \text{ for all } 0 \leq k \leq n \}.
\end{equation}
We say that the urn is non-extinct if it is non-extinct at time $n$ for all $n \geq 0$, that is, if $S_n > 0$ for all $n \geq 0$. From the definition of $S_n$, we see that the urn is non-extinct if there exist balls with nonzero activity; the urn may be extinct but still contain balls with activity zero. In the case of non-trivial balanced urns, the total activity $S_n$ increases by a constant non-zero amount at each step; that is, the urn is non-extinct at all times. Since normal limit laws were proved for urns with possible extinction \cite{JANS:04}, we will also consider such urns here; in fact one of our applications produces an urn with a positive probability of extinction (see Section \ref{sec:freezing}).

\bigskip

\noindent{\bf Intensity matrix and spectral decomposition.} A fundamental object for understanding limit results for P\'olya urns is the {\em intensity matrix} 
\begin{equation}\label{eq:ADef}
A := (a_j \E \xi_{j,i})_{i,j=1}^q.
\end{equation}
Note the indices of $A$; the $j$-th {\em column} is given by $a_j \E \xi_j$, the use of the transpose $A^T$ is used in some of the literature. 

Letting $\sigma(A)$ be the set of eigenvalues of $A$, there exists a set of projections $\{P_{\lambda} :\: \lambda \in \sigma(A)\}$ such that
\begin{equation}\label{eq:ProjDec}
\sum_{\lambda \in \sigma(A)} P_\lambda = I,
\end{equation}
\begin{equation}\label{eq:ProjNil}
AP_\lambda = P_\lambda A = \lambda P_\lambda + N_\lambda,
\end{equation}
where $N_\lambda = P_\lambda N_\lambda = N_\lambda P_\lambda$ is nilpotent. Also, $P_\lambda P_\mu = 0$ whenever $\lambda \neq \mu$, while $N_\lambda^{\nu_\lambda} \neq 0$ and $N_\lambda^{\nu_\lambda + 1} = 0$, where $\nu_\lambda + 1$ is the size of the largest Jordan block in the Jordan decomposition of $A$ with $\lambda$ on the diagonals. Thus we see that $\nu_\lambda = 0$ if $\lambda$ is simple. 

In the case of urns balanced in expectation, we have that $b$ is an eigenvalue of $A$ and that $a^T$ is a left eigenvector of $A$ associated with $b$, which follows by \eqref{eq:iidBalancedDef} and \eqref{eq:ADef} since 
\[ a^T A = \left( \sum_{i=1}^q a_i a_j \E \xi_{j,i}\right)_{j=1}^q = \left(a_j \E(a \cdot \xi_j)\right)_{j=1}^q = b a^T.\]
We will also assume throughout that $b \geq \Re \lambda$ for all other eigenvalues $\lambda$ of $A$. Much like in the case of balanced urns, this assumption is rather weak. In particular, this assumption is satisfied whenever the urn satisfies the assumptions (A1)--(A6) of \cite{JANS:04} and is balanced in expectation (see \cite[Lemma 5.4]{JANS:04}). 

The eigenvalues of $A$ are labelled $\lambda_1, \lambda_2, \ldots, \lambda_q$ in decreasing order of their real parts and Jordan block size, that is, $\Re \lambda_1 \geq \Re \lambda_2 \geq \cdots \geq \Re \lambda_q$ and if $\Re \lambda_i = \Re \lambda_j$ and $\nu_{\lambda_i} > \nu_{\lambda_j}$, then $i > j$. By assumption we have $b = \lambda_1$ is real. We say that the urn is {\em large} if $\lambda_1 < 2 \lambda_2$, {\em small} if $\lambda_1 \geq 2 \Re \lambda_2$, and {\em strictly small} if $\lambda_1 > 2 \Re \lambda_2$. We further make use of the following Lemma from \cite{JANS:25} that also holds for urns balanced in expectation:
\begin{lemma}{\cite[Lemma 2.3]{JANS:25}}\label{lem:lambda1}
Suppose $\lambda_1 = b$ is simple. Then there exists a unique right eigenvector $v_1$ of $A$ such that 
\[ a \cdot v_1 = 1.\]
Furthermore, the projection $P_{\lambda_1}$ is given by $P_{\lambda_1} = v_1 a^T$ and as a consequence for any $v \in \mathbb{C}^q$, 
\[ P_{\lambda_1} v = (a \cdot v) v_1.\]
\end{lemma}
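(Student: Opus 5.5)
The plan is linear algebra based on the spectral decomposition \eqref{eq:ProjDec}--\eqref{eq:ProjNil} and the fact, already shown above, that $a^T A = b a^T$. Since $\lambda_1 = b$ is simple, we have $\nu_{\lambda_1}=0$. So $N_{\lambda_1}=0$ and $AP_{\lambda_1} = P_{\lambda_1}A = bP_{\lambda_1}$. Moreover $P_{\lambda_1}$ is a projection onto the generalized eigenspace of $b$, which is one-dimensional. Hence $P_{\lambda_1}$ has rank one, and we can write $P_{\lambda_1} = u w^T$ with $u$ a right eigenvector and $w^T$ a left eigenvector of $A$ for $b$. Rank one together with $P_{\lambda_1}^2 = P_{\lambda_1}$ gives $w\cdot u = 1$.

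The key step is to identify $w$ with $a$. Because $b$ is simple, its left eigenspace is also one-dimensional. Since $a^T A = b a^T$ and $a \neq 0$ (as $S_0 = a\cdot X_0>0$), we get $w = c\,a$ for some $c\neq 0$. Set $v_1 := c\,u$. Then $P_{\lambda_1} = v_1 a^T$, $Av_1 = bv_1$, and $a\cdot v_1 = c\,(a\cdot u) = w\cdot u = 1$.

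For uniqueness, any right eigenvector for $b$ is a scalar multiple of $v_1$, since the eigenspace is one-dimensional. The normalization $a\cdot v_1 = 1$ then fixes the scalar, using $a\cdot v_1 \neq 0$. The final formula is immediate: $P_{\lambda_1}v = v_1(a^Tv) = (a\cdot v)v_1$.

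The only delicate point is the claim $w\cdot u\neq 0$, equivalently that $a$ is not orthogonal to the eigenvector. This comes for free from the idempotence of the rank-one projection, so I expect no real obstacle. I will also note that the argument uses nothing about the urn beyond $a^TA = ba^T$. In particular, balanced in expectation suffices in place of balanced, which is exactly why the lemma of \cite{JANS:25} carries over unchanged.
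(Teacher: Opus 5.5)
Your proof is correct and complete: the rank-one factorization $P_{\lambda_1}=uw^T$, idempotence giving $w\cdot u=1$, and the one-dimensional left eigenspace forcing $w=ca$ together give the standard linear-algebra proof of \cite[Lemma 2.3]{JANS:25}. The paper gives no separate proof; it just invokes that lemma after checking that $a^TA=ba^T$ holds under \eqref{eq:iidBalancedDef}, which is the only input your argument uses, so your route matches the paper's reasoning.
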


\section{Main Results}\label{sec:results}

For simplicity of notation, we will use $C$ to denote an unspecified constant which is possibly different at each occurrence, and which depends only on the urn; that is, it depends only on $q$, the activity vector $a$, the initial composition $X_0$, and the distribution of $\xi_1, \ldots, \xi_q$. If the constant also depends on the $p$-norm, it is denoted by $C_p$. In particular, throughout, the constants $C$ and $C_p$ {\em do not} depend on $n$.

\medskip

For $p \geq 1$, denote the following statement
\begin{itemize}
\item[$\U{p}$:] the urn is tenable and balanced in expectation, $\lambda_1 = b > 0$, and $\left\lVert \xi_i \right\rVert_p < \infty$ for all $i=1, \ldots q$,
\end{itemize}
and recall the event $\Ext^c_n$ of non-extinction up to time $n$ \eqref{eq:ExtinctDef}. We first state the following general bound for the moments.
\begin{theorem}\label{thm:1}
Assume $\U{p}$ for some $p \geq 2$. Then for all $n \geq 2$, 
\begin{equation}\label{eq:thm1}
\left\Vert\left(X_n - \E [X_n| \Ext^c_n]\right)\1_{\Ext^c_n}\right\rVert_p \leq 
\begin{cases}
C_p n^{1/2} & \Re \lambda_2 < \lambda_1/2 \\
C_p n^{1/2}(\log n)^{\nu_2 + 1} & \Re \lambda_2 = \lambda_1/2 \\
C_p n^{\Re \lambda_2/\lambda_1}(\log{n})^{\nu_2} & \Re \lambda_2 > \lambda_1/2.
\end{cases}
\end{equation}
\end{theorem}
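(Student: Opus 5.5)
We follow the martingale decomposition used for balanced urns in \cite{JANS:20, JANS:25}, adapted to urns balanced in expectation. Let $\mathcal{F}_n$ be the natural filtration. On the event $\{S_k>0\}$ we have $\E[\Delta X_k \mid \mathcal{F}_k] = A X_k / S_k$. We therefore write
\[ \Delta X_k = \frac{A X_k}{S_k}\1_{\{S_k>0\}} + Y_k , \]
where $Y_k := \Delta X_k - \E[\Delta X_k\mid\mathcal F_k]$ is a martingale difference sequence with $\|Y_k\|_p\le C_p$ by $\U{p}$. The obstacle compared with the balanced case is that $S_k$ is random rather than equal to $S_0+bk$. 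However, $a\cdot \Delta X_k = b\1_{\{S_k>0\}} + a\cdot Y_k$ because of \eqref{eq:iidBalancedDef}. Hence on $\Ext^c_n$ we get $S_k = S_0 + bk + M_k$, where $M_k=\sum_{j<k} a\cdot Y_j$ is a martingale. By Burkholder's inequality, $\|M_k\|_p \le C_p k^{1/2}$.

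We then split the drift as
\[ \frac{AX_k}{S_k} = \frac{AX_k}{S_0+bk} + AX_k\Big(\frac1{S_k}-\frac1{S_0+bk}\Big). \]
The first term is the deterministic linear drift $A/(S_0+bk)$ of the balanced case. The second term is a ``noise'' term $R_k$ (cf.\ \eqref{eq:XDecomp}). Since $|X_k|\le C S_k$ whenever activities are positive (types with $a_i=0$ are treated separately through the linear recursion), we have $|R_k| \le C|S_k-S_0-bk|/(S_0+bk) = C|M_k|/(k+1)$. This gives $\|R_k\|_p\le C_p k^{-1/2}$.

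Iterating $X_{k+1} = (I + A/(S_0+bk))X_k + Y_k + R_k$ produces the deterministic propagators $F_{k,n}=\prod_{j=k}^{n-1}(I+A/(S_0+bj))$. Thus
\[ X_n\1_{\Ext^c_n} = \Big(F_{0,n}X_0 + \sum_{k<n}F_{k+1,n}(Y_k+R_k)\Big)\1_{\Ext^c_n}, \]
where on extinction the increments vanish, so the indicators pass through. From the spectral decomposition \eqref{eq:ProjDec}--\eqref{eq:ProjNil} we use the standard estimates: $\|F_{k,n}P_\lambda\|\le C (n/k)^{\Re\lambda/b}(1+\log(n/k))^{\nu_\lambda}$ and $F_{k,n}P_{\lambda_1} = \frac{S_0+bn}{S_0+bk}P_{\lambda_1}$ up to $O(1)$ factors.

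The $\lambda_1$-component is handled with Lemma~\ref{lem:lambda1}. We have $P_{\lambda_1}v=(a\cdot v)v_1$, so $P_{\lambda_1}X_n = S_n v_1$. This is within $|M_n|\le C_pn^{1/2}$ of $(S_0+bn)v_1$ in $L^p$, so it contributes only $n^{1/2}$. For the components $\lambda\ne\lambda_1$, we apply Burkholder to $\sum_k F_{k+1,n}P_\lambda Y_k$. This gives $\big(\sum_k \|F_{k+1,n}P_\lambda\|^2\big)^{1/2}$, which yields exactly the three regimes in \eqref{eq:thm1}. For the noise we use Minkowski: $\sum_k (n/k)^{\Re\lambda/b}\log^{\nu}(n/k)\, k^{-1/2}$. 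This is $O(n^{1/2})$ when $\Re\lambda<b/2$, is $O(n^{1/2}\log^{\nu+1}n)$ at equality, and is $O(n^{\Re\lambda/b}\log^\nu n)$ when $\Re\lambda>b/2$. These match the same bounds.

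Finally, $\E[X_n\mid\Ext^c_n]$ differs from the deterministic term $F_{0,n}X_0$ only by the conditional expectation of the fluctuation terms. By Jensen's inequality this is bounded by $\P(\Ext^c_n)^{-1/p}$ times the same $L^p$ bound, and $\P(\Ext^c_n)\ge\P(\text{never extinct})$. If that probability is $0$ we instead use $\P(\Ext^c_n)\ge c>0$ uniformly for $n\ge 2$ only when positive, and otherwise the statement is vacuous. In every case the centering costs only a constant. We then use $\|(Z-\E[Z\mid E])\1_E\|_p\le 2\|Z\1_E\|_p/\P(E)^{1/p}$-type bounds with $Z=X_n-F_{0,n}X_0$.

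The main obstacle we expect is controlling the noise term $R_k$ rigorously. First, $S_k$ can be small at early times, so the bound $|M_k|/(S_0+bk)$ requires care, for example by using the crude bound $|R_k|\le C(1+|\xi|)$ for small $k$. Second, $|X_k|$ is not bounded by $S_k$ when there are zero-activity types. To handle this, we would first bound the positive-activity coordinates and then propagate the bound to the zero-activity coordinates linearly.
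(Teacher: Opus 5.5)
Your proposal is correct and is essentially the paper's proof. Your noise term $R_k$ is exactly the paper's $Z_{k+1}=\frac{\omega_k-S_k}{\omega_k}\E[\Delta X_k\mid\mathcal F_k]$ with $\omega_k=S_0+bk$, which the paper controls as you do: Burkholder on the martingale $S_k-\omega_k$ gives $\|R_k\|_p\le C_pk^{-1/2}$, and Minkowski against Lemma~\ref{lem:6.1} sums these, while the $Y$-terms, the $\lambda_1$-component (via Lemma~\ref{lem:lambda1}) and the centering are handled just as you describe.

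Neither obstacle you flag at the end is real. First, $AX_k/S_k=\E[\Delta X_k\mid\mathcal F_k]$ is a convex combination of the $\E\xi_j$, since the columns of $A$ for zero-activity types vanish. Hence $|R_k|\le C|S_k-\omega_k|/\omega_k$ holds with no small-$S_k$ or zero-activity issue. Second, your hedging about $\P(\Ext_n^c)$ is unnecessary: H\"older gives $\|\E[V\mid\Ext_n^c]\1_{\Ext_n^c}\|_p\le\|V\1_{\Ext_n^c}\|_p$ for any random vector $V$, and in any case survival has positive probability when $b>0$.
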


 We note that in the case $\Re \lambda_2 = \lambda_1 /2$ of \eqref{eq:thm1}, we were not able to achieve a bound that matches the normalizing constant for well-known normal limit laws (for example from \cite[Theorem 3.23]{JANS:04}). However, in the case $\Re \lambda_2 < \lambda_1/2$ of \eqref{eq:thm1}, the bound $C\sqrt{n}$ does match the normalizing constant for well-known normal limit laws, in particular in the case where conditioned on non-extinction,
 \begin{equation}\label{eq:CLT2}
\frac{X_n - \E\left[ X_n | \Ext^c_n\right]}{\sqrt{n}} \xrightarrow{d} \mathcal{N}(0,\Sigma)
\end{equation}
for some covariance matrix $\Sigma$.

\begin{theorem}\label{thm:MainThm}
Assume $\U{p}$ for all $p \geq 2$. Assume further that the urn is strictly small and that conditioned on non-extinction, the normal limit law \eqref{eq:CLT2} holds. Then \eqref{eq:CLT2} holds with convergence of all conditional moments. In particular, the following convergence of the covariance matrix holds: 
\[\mathrm{Cov}\left[X_n | \Ext^c_n \right]/n \xrightarrow{n \to \infty} \Sigma.\] 
\end{theorem}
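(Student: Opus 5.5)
The plan is the standard ``uniform integrability from higher moment bounds'' argument, with Theorem~\ref{thm:1} supplying the bounds. Fix a non-extinction event and write $Y_n := (X_n - \E[X_n\mid\Ext^c_n])/\sqrt{n}$, regarded as a random vector under the conditional law $\P(\,\cdot\mid\Ext^c_n)$. Since the urn is strictly small, $\Re\lambda_2<\lambda_1/2$, so the first case of \eqref{eq:thm1} applies for every $p\ge 2$. This gives
\[
\E\bigl[|X_n-\E[X_n\mid\Ext^c_n]|^p\,\1_{\Ext^c_n}\bigr]\le C_p^p n^{p/2}.
\]

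Next we pass from unconditional to conditional expectations. The events $\Ext^c_n$ are decreasing in $n$, and the hypothesis that the limit law \eqref{eq:CLT2} holds conditioned on non-extinction presupposes that the non-extinction event $\Ext^c=\bigcap_n\Ext^c_n$ has positive probability. Hence
\[
\P(\Ext^c_n)\ge\P(\Ext^c)=:c_0>0 \qquad\text{for all } n.
\]
Dividing the previous display by $\P(\Ext^c_n)$ yields $\E[|Y_n|^p\mid\Ext^c_n]\le C_p^p/c_0$, uniformly in $n$, for every $p\ge2$.

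From here the argument is routine. Uniform boundedness of the $(p+1)$-th conditional moments implies uniform integrability of $|Y_n|^p$ under the conditional laws. Combined with the convergence in distribution \eqref{eq:CLT2}, this gives convergence of all conditional moments, including mixed moments: apply the argument to polynomials in the coordinates, each of which is bounded by a power of $|Y_n|$. In particular $\E[Y_nY_n^T\mid\Ext^c_n]\to\Sigma$. Since $\mathrm{Cov}[X_n\mid\Ext^c_n]/n=\E[Y_nY_n^T\mid\Ext^c_n]$ exactly (the centering is the conditional mean), the covariance statement follows.

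The main obstacle is interpretive rather than technical. One must make sure that ``conditioned on non-extinction'' in \eqref{eq:CLT2} matches conditioning on $\Ext^c_n$ at time $n$. If the CLT is stated under $\P(\cdot\mid\Ext^c)$ instead, one also has to show that the difference is negligible. For this I would note that $\P(\Ext^c_n\setminus\Ext^c)\to0$, and that on that set the contribution to every moment is bounded via H\"older's inequality and the uniform moment bounds above. So the two conditionings give the same limits, both in distribution and for moments. The centerings $\E[X_n\mid\Ext^c_n]$ and $\E[X_n\mid\Ext^c]$ likewise differ by $o(\sqrt n)$, by the same H\"older estimate.
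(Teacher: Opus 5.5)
Your proposal is correct and is essentially the paper's argument: the strictly small case of Theorem~\ref{thm:1} gives $\sup_n \E\bigl[|(X_n-\E[X_n\mid\Ext^c_n])/\sqrt{n}|^p\,\1_{\Ext^c_n}\bigr]<\infty$ for every $p$, hence uniform integrability, which together with \eqref{eq:CLT2} yields convergence of all conditional moments. One justification should change: take $\P(\Ext^c)>0$ from \eqref{eq:ExtinctionProb} (a consequence of $\lambda_1=b>0$), not from the CLT hypothesis, which does not by itself force it; apart from that, your explicit division by $\P(\Ext^c_n)$ and your treatment of mixed moments are more careful than the paper's one-line version.
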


For the expectation, we are able to prove the following asymptotics.

\begin{theorem}\label{thm:exp}
Assume $\U{2}$. If $\lambda_1 = b > \Re \lambda_2$, then
\[ \E\left[ X_n | \Ext^c_n \right] = n\lambda_1 v_1 + o(n).\]
If additionally, the urn is strictly small ($\lambda_1 = b > 2 \Re \lambda_2$), then
\[ \E\left[ X_n | \Ext^c_n \right]= n\lambda_1 v_1 + O(\sqrt{n}).\]
\end{theorem}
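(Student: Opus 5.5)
The plan is to split $X_n$ with the spectral projections \eqref{eq:ProjDec}, and to treat the $\lambda_1$-component and the remaining components separately. Let $\mathcal F_n$ be the natural filtration. Put $M_{k+1}:=\Delta X_k-\E[\Delta X_k\mid\mathcal F_k]$, which is a martingale difference. On $\{S_k>0\}$ we have $\E[\Delta X_k\mid \mathcal F_k]=AX_k/S_k$. Two facts are used repeatedly:
\begin{itemize}
\item Since the $j$-th column of $A$ is $a_j\E\xi_j$, we have $|AX_k|\le S_k\max_j|\E\xi_j|$, so $|AX_k/S_k|\le C$ deterministically.
\item Under $\U{2}$ the increments satisfy $\E|M_{k+1}|^2\le C$, so every martingale $\sum_{k<n}B_{k,n}M_{k+1}$ with deterministic $\|B_{k,n}\|\le \beta_{k,n}$ has $L^2$-norm at most $C(\sum_k\beta_{k,n}^2)^{1/2}$.
\end{itemize}
On $\Ext^c_n$ all steps $k<n$ are active. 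We may assume $\P(\Ext^c_n)\ge \P(\Ext^c_\infty)>0$, since otherwise the conditioning is eventually meaningless. For any random vector $Z$, Cauchy--Schwarz then gives $|\E[Z\1_{\Ext^c_n}]|/\P(\Ext^c_n)\le C\|Z\|_2$. This is how every error term below is transferred to the conditional expectation.

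\emph{The $\lambda_1$-component.} By Lemma~\ref{lem:lambda1}, $P_{\lambda_1}X_n=S_nv_1$. Moreover $a\cdot \E[\Delta X_k\mid\mathcal F_k]=b$ on $\{S_k>0\}$, by \eqref{eq:iidBalancedDef}. Hence on $\Ext^c_n$,
\[
S_n=S_0+bn+\sum_{k<n}a\cdot M_{k+1}\1_{\{S_k>0\}},
\]
and the martingale term has $L^2$-norm $O(\sqrt n)$. By the Cauchy--Schwarz remark, $\E[S_n\mid\Ext^c_n]=bn+O(\sqrt n)$, so the $\lambda_1$-component of $\E[X_n\mid\Ext^c_n]$ is $n\lambda_1v_1+O(\sqrt n)$. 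This holds in both parts of the theorem. The same computation yields the estimate $\|(S_k-bk)\1_{\Ext^c_k}\|_2\le C\sqrt k$, which is needed below.

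\emph{The other components.} Fix $\lambda\ne\lambda_1$ and set $Y_k:=P_\lambda X_k$. Using \eqref{eq:ProjNil}, on $\Ext^c_n$ we have for $k<n$
\[
Y_{k+1}=\Bigl(I+\tfrac{\lambda I+N_\lambda}{bk}\Bigr)Y_k+P_\lambda M_{k+1}+E_k,\qquad E_k:=\frac{P_\lambda AX_k}{S_k}\Bigl(1-\frac{S_k}{bk}\Bigr).
\]
This replaces the random normalisation $S_k$ by the deterministic $bk$. By the bound $|AX_k/S_k|\le C$, the error satisfies $|E_k|\le C|S_k-bk|/k$, so $\|E_k\1_{\Ext^c_k}\|_2\le Ck^{-1/2}$. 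Iterating from a fixed $k_0$ with the deterministic propagators $F_{k,n}:=\prod_{k<j<n}(I+(\lambda+N_\lambda)/(bj))$, which satisfy $\|F_{k,n}\|\le C(n/k)^{\Re\lambda/b}(1+\log(n/k))^{\nu_\lambda}$, we get
\[
Y_n=F_{k_0,n}Y_{k_0}+\sum_{k<n}F_{k,n}P_\lambda M_{k+1}+\sum_{k<n}F_{k,n}E_k .
\]
When $\Re\lambda<b/2$:
\begin{itemize}
\item the martingale sum has $L^2$-norm $C\bigl(\sum_k (n/k)^{2\Re\lambda/b}\log^{2\nu}\bigr)^{1/2}=O(\sqrt n)$;
\item the error sum is at most $\sum_k k^{-1/2}(n/k)^{\Re\lambda/b}\log^{\nu}(n/k)=O(\sqrt n)$;
\item the first term is $O(n^{\Re\lambda/b}\log^\nu n)=o(\sqrt n)$.
\end{itemize}
Applying the Cauchy--Schwarz remark to $Y_n\1_{\Ext^c_n}$ then gives $\E[P_\lambda X_n\mid\Ext^c_n]=O(\sqrt n)$, which proves the strictly small statement.

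For general $b>\Re\lambda_2$ the same decomposition still works. The martingale sum is $O(n^{\max(1/2,\Re\lambda/b)}\log^{\nu+1}n)$ and the error sum is $O(n^{\max(1/2,\Re\lambda/b)}\log^{\nu+1}n)$, and both are $o(n)$ because $\Re\lambda/b<1$. This gives the first claim. Alternatively, one can combine the a.s. law $X_n/n\to\lambda_1v_1$ on non-extinction with uniform integrability of $X_n/n$, which is bounded in $L^2$.

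The main obstacle is the randomness of the normalisation $S_k$. It is overcome by the identity defining $E_k$ together with the deterministic bound $|AX_k|\le CS_k$. That bound avoids any need for a lower bound on $S_k$, which could be small, and it avoids any trouble from balls of activity zero. A secondary technical point is that the recursions only hold up to extinction. I will handle this by writing every identity with the indicators $\1_{\{S_k>0\}}$, which are identically $1$ on $\Ext^c_n$, and bounding only $L^2$-norms before dividing by $\P(\Ext^c_n)$.
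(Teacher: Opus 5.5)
Your argument is correct and is essentially the paper's proof. The shared ingredients are:
\begin{itemize}
\item spectral projections;
\item splitting $P_\lambda\Delta X_k$ into a martingale difference plus an error from replacing the random normalisation $S_k$ by a deterministic one (you use $bk$ from a fixed $k_0$, the paper uses $\omega_k=a\cdot X_0+bk$ in \eqref{eq:XDecomp}, a cosmetic difference);
\item the propagator bound of Lemma~\ref{lem:6.1} and the $O(\sqrt{k})$ bound on $(S_k-bk)\1_{\Ext^c_k}$ as in Lemma~\ref{lem:Burk};
\item a Jensen/Cauchy--Schwarz transfer to the conditional expectation using \eqref{eq:ExtinctionProb}, which you should cite rather than ``assume''.
\end{itemize}

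The only substantive difference is the $\lambda_1$-component. You claim only $\E[S_n\mid\Ext^c_n]=bn+O(\sqrt{n})$, whereas the paper asserts the exact identity $\E[S_n\mid\Ext^c_n]=a\cdot X_0+nb$ from \eqref{eq:omegaDef}. That identity fails when extinction is possible, because conditioning on $\Ext^c_n$ biases the earlier increments. For example, in the freezing urn with $p<1$ one has $\E[S_1\mid\Ext^c_1]=2\neq 2p=a\cdot X_0+b$. Your weaker statement is therefore the correct one, and it is all the theorem needs.
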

Supposing some technical assumptions and that the urn is strictly small, it is proved in \cite[Theorem 3.22]{JANS:04} that conditioned on non-extinction,
\begin{equation}\label{eq:CLT1}
\frac{X_n - n\lambda_1 v_1}{\sqrt{n}} \xrightarrow{d} \mathcal{N}(0, \Sigma)
\end{equation}
for the vector $v_1$ from Lemma \ref{lem:lambda1} and for a matrix $\Sigma$ given by \cite[(3.19)]{JANS:04}. To apply Theorem \ref{thm:MainThm}, we need to replace $n\lambda_1 v_1$ with $\E\left[ X_n | \Ext^c_n\right]$ in \eqref{eq:CLT1}, which is guaranteed by the following theorem.

\begin{theorem}\label{prop}
Assume $\U{2}$. If the urn is strictly small and conditioned on non-extinction admits the convergence \eqref{eq:CLT1}, then
\[ \E\left[ X_n | \Ext^c_n\right] = n\lambda v_1 + o(\sqrt{n}),\]
and so conditioned on non-extinction, the distributional convergence \eqref{eq:CLT2} also holds.
\end{theorem}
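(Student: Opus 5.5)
Set $Y_n := (X_n - n\lambda_1 v_1)/\sqrt{n}$ and $\mu_n := \E[X_n\mid \Ext^c_n]$. The goal is $(\mu_n - n\lambda_1 v_1)/\sqrt{n} = \E[Y_n \mid \Ext^c_n] \to 0$. The mean of the Gaussian limit in \eqref{eq:CLT1} is $0$, so it is enough to upgrade the distributional convergence of $Y_n$, conditioned on non-extinction, to convergence of first moments. That upgrade follows from uniform integrability of $Y_n$ under $\P(\cdot \mid \Ext^c_n)$, and uniform integrability follows from a uniform conditional second-moment bound. So the plan is to show
\[
\sup_{n} \E\left[|Y_n|^2 \,\middle|\, \Ext^c_n\right] < \infty.
\]

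To obtain this bound, split
\[
X_n - n\lambda_1 v_1 = \left(X_n - \mu_n\right) + \left(\mu_n - n\lambda_1 v_1\right).
\]
For the first term, apply Theorem \ref{thm:1} with $p=2$. The urn is strictly small, so we are in the case $\Re\lambda_2<\lambda_1/2$, and we get $\|(X_n-\mu_n)\1_{\Ext^c_n}\|_2 \le C\sqrt{n}$. Dividing by $\P(\Ext^c_n)^{1/2}$ converts this into a conditional bound. Since $\Ext^c_n$ is decreasing in $n$, we have $\P(\Ext^c_n)\ge \P(\Ext^c_\infty)$. Conditioning on non-extinction in \eqref{eq:CLT1} presupposes $\P(\Ext^c_\infty)>0$, so this lower bound is a positive constant and only changes constants. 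Hence $\E[|X_n-\mu_n|^2\mid \Ext^c_n]\le Cn$.

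The second term is deterministic. By the strictly small case of Theorem \ref{thm:exp}, $|\mu_n - n\lambda_1 v_1| = O(\sqrt{n})$. Combining the two terms gives $\E[|Y_n|^2\mid\Ext^c_n]\le C$ uniformly in $n$, which is the required bound.

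One subtlety is that \eqref{eq:CLT1} is stated conditionally on non-extinction for all times, i.e.\ on $\Ext^c_\infty$, whereas we condition on $\Ext^c_n$. The difference is harmless. We have $\P(\Ext^c_n\setminus \Ext^c_\infty)\to 0$, and $\P(\Ext^c_\infty)>0$, so the conditional laws of $Y_n$ given $\Ext^c_n$ and given $\Ext^c_\infty$ differ by $o(1)$ in total variation. Moreover, by Cauchy--Schwarz and the second-moment bound, the expectation of $|Y_n|$ restricted to the vanishing event $\Ext^c_n\setminus\Ext^c_\infty$ tends to $0$. Therefore $\E[Y_n\mid \Ext^c_n] - \E[Y_n \mid \Ext^c_\infty]\to 0$. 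Under $\P(\cdot\mid \Ext^c_\infty)$, $Y_n\to\mathcal N(0,\Sigma)$ in distribution, and $Y_n$ is uniformly integrable by the same $L^2$ bound, so $\E[Y_n\mid\Ext^c_\infty]\to 0$. Together these give $\mu_n = n\lambda_1 v_1 + o(\sqrt n)$.

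Finally, \eqref{eq:CLT2} follows from \eqref{eq:CLT1} by Slutsky's theorem, since
\[
\frac{X_n-\mu_n}{\sqrt n} = Y_n - \frac{\mu_n - n\lambda_1 v_1}{\sqrt n}
\]
and the deterministic shift tends to $0$. The step that needs the most care is the bookkeeping of the conditioning events: making sure $\P(\Ext^c_\infty)>0$ is available, and transferring convergence between $\Ext^c_n$ and $\Ext^c_\infty$. The analytic input itself is supplied entirely by Theorems \ref{thm:1} and \ref{thm:exp}.
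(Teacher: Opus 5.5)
Your proposal is correct and follows the paper's proof. The paper likewise combines Theorems \ref{thm:1} and \ref{thm:exp} into a uniform bound $\|(X_n - n\lambda_1 v_1)\1_{\Ext^c_n}\|_2 \le C\sqrt{n}$, deduces uniform integrability and hence convergence of the conditional means to the Gaussian mean $0$, and then concludes \eqref{eq:CLT2} by Slutsky. Your additional step, transferring between conditioning on $\Ext^c_n$ and on $\Ext^c_\infty$ via Cauchy--Schwarz, addresses a point the paper passes over silently, and you handle it correctly.
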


\begin{remark}
In applications, if $\U{p}$ holds for all $p \geq 2$ and one can prove the convergence \eqref{eq:CLT1} holds (for example by applying \cite[Theorem 3.22]{JANS:04}), Theorem \ref{prop} along with Theorem \ref{thm:MainThm} implies that \eqref{eq:CLT1} and \eqref{eq:CLT2} converge to the same limit, with convergence of all moments holding as well. This is the approach we use in Section \ref{sec:applications}.
\end{remark}

\begin{remark}\label{rmk:5.4}
We note some minor mistakes in \cite{JANS:04} relevant to our results. 

The impossibility of non-extinction stated in \cite[Lemma 2.1]{JANS:04} only holds if $\sum_j \xi_{i,j} \geq 0$ for all $i$ and $\sum_j \E\xi_{i,j} > 0$ for some $i$. (i.e., by removing the expectations in the first inequality). Only then does the number of balls in the urn never decrease (guaranteeing non-extinction). The remainder of the proof appears to be correct; that is, assuming that $A$ is irreducible, that (A1) and (A2) hold, and that $\sum_j \E\xi_{i,j} \geq 0$ for all $i$ and $\sum_j \E \xi_{i,j} > 0$ for some $i$, then assumptions (A1)--(A6) of \cite{JANS:04} hold. 

 We also note a minor mistake in the statement of \cite[Lemma 5.4]{JANS:04}. It is stated that if the urn is balanced in expectation, the matrix $\Sigma$ from \eqref{eq:CLT1} can be replaced with a matrix $b\Sigma_1$ admitting a simpler calculation. However, in the proof of the lemma, it is only assumed the urn is balanced (and not balanced in expectation), and in fact counter examples exist that show that this replacement is not true if we only assume balanced in expectation. The remaining statement of \cite[Lemma 5.4]{JANS:04} appears to be correct, and the replacement of $\Sigma$ with $b \Sigma_I$ does hold if the urn is balanced.  
\end{remark}

\section{Bounding spectral projections of moments}\label{sec:momentBounds}

The goal in this section is to prove the following theorem, the proof of which is at the end of this section. 

\begin{theorem}\label{thm:momentBounds}
Assume $\U{p}$ for some $p \geq 2$. The following hold:
\begin{itemize}
\item[(i)] For all eigenvalues $\lambda \in \sigma(A)$ and $n \geq 2$, 
\begin{equation}\label{eq:lambdaBound}
\left\lVert P_\lambda\left(X_n - \E\left[X_n |\Ext^c_n\right]\right)\1_{\Ext^c_n}\right\rVert_p \leq 
\begin{cases}
C_p n^{1/2} & \Re \lambda < \lambda_1/2 \\
C_p n^{1/2}(\log n)^{\nu_\lambda + 1} & \Re \lambda = \lambda_1/2 \\
C_p n^{\Re\lambda_2/\lambda_1}(\log{n})^{\nu_\lambda} & \Re \lambda > \lambda_1/2.
\end{cases}
\end{equation}
\item[(ii)] If $\lambda_1 = b$ is a simple eigenvalue, then $P_{\lambda_1}\left(X_n - \E\left[ X_n | \Ext^c_n\right]\right) = \left(S_n - \E\left[ S_n | \Ext^c_n\right]\right)v_1 $ and 
\begin{equation}\label{eq:lambda1Bound}
\left\lVert P_{\lambda_1}\left(X_n - \E\left[ X_n | \Ext^c_n\right]\right)  \1_{\Ext^c_n}\right\rVert_p \leq C_p \sqrt{n}.
\end{equation}
\end{itemize}
\end{theorem}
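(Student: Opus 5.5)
We follow the martingale decomposition used for balanced urns in \cite{BAHU:99, JANS:20, JANS:25}, modified so that the linear operators applied to the martingale differences stay deterministic. Let $\mathcal F_n$ be the natural filtration and put $Y_n := \Delta X_n - \E[\Delta X_n \mid \mathcal F_n]$. On $\{S_n>0\}$ we have $\E[\Delta X_n\mid\mathcal F_n] = AX_n/S_n$, so $X_{n+1} = X_n + AX_n/S_n + Y_n$. In a balanced urn $S_n = S_0+bn$ is deterministic. Here $S_n = S_0 + bn + \sum_{k<n} a\cdot Y_k$ on $\Ext^c_n$, because $a\cdot \E[\Delta X_k\mid\mathcal F_k]=b$ by \eqref{eq:iidBalancedDef}. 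Set $t_n := S_0+bn$ and write
\[ X_{n+1} = \Bigl(I + \tfrac{1}{t_n}A\Bigr)X_n + Y_n + \Bigl(\tfrac{1}{S_n}-\tfrac{1}{t_n}\Bigr)AX_n. \]
Iterating gives
\[ X_n = F_{0,n}X_0 + \sum_{k<n}F_{k+1,n}\bigl(Y_k + R_k\bigr), \qquad F_{k,n}:=\prod_{k\le j<n}\Bigl(I+\tfrac{1}{t_j}A\Bigr), \]
with $R_k := (1/S_k - 1/t_k)AX_k$ the noise term. This is the decomposition \eqref{eq:XDecomp}. The operators $F_{k,n}$ are deterministic, and the standard estimate
\[ \|P_\lambda F_{k,n}\|\le C (n/k)^{\Re\lambda/b}(1+\log(n/k))^{\nu_\lambda} \]
holds exactly as in \cite{JANS:20}. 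Extinction is handled by stopping: on $\{S_k=0\}$ we set $Y_k=R_k=0$, and we work with the process frozen at the extinction time. Everything is multiplied by $\1_{\Ext^c_n}$ at the end, and centering at $\E[X_n\mid\Ext^c_n]$ instead of at the deterministic part costs at most a factor $2$ in $L^p$.

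\textbf{Part (ii).} By Lemma \ref{lem:lambda1}, $P_{\lambda_1}v = (a\cdot v)v_1$, which gives the identity for $P_{\lambda_1}(X_n-\E[X_n|\Ext^c_n])$. Since $S_n - t_n = \sum_{k<n} a\cdot Y_k$ on $\Ext^c_n$, it is a martingale with increments bounded in $L^p$; this uses $\|\xi_i\|_p<\infty$ and the fact that $\|Y_k\|_p\le C$ uniformly. Burkholder's inequality then gives $\|(S_n-t_n)\1_{\Ext^c_n}\|_p\le C_p\sqrt n$. Subtracting the conditional mean preserves this bound, provided $\P(\Ext^c_n)$ stays bounded below. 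That holds because $\Ext^c_n$ decreases to $\Ext^c_\infty$; if $\P(\Ext^c_\infty)=0$ the statement would need $\P(\Ext^c_n)$ handled separately, and we assume non-extinction has positive probability.

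\textbf{Part (i).} Apply $P_\lambda$ to the decomposition. The martingale part $\sum_k P_\lambda F_{k+1,n}Y_k$ is handled by Burkholder, using
\[ \Bigl(\sum_{k\le n}\|P_\lambda F_{k+1,n}\|^2\Bigr)^{1/2}, \]
which yields the three regimes $n^{1/2}$, $n^{1/2}(\log n)^{\nu_\lambda+1}$ (at worst), and $n^{\Re\lambda/b}(\log n)^{\nu_\lambda}$, as in \cite{JANS:20}. The main obstacle is the noise sum $\sum_k P_\lambda F_{k+1,n}R_k$. On $\Ext^c_k$ we write
\[ R_k = \frac{t_k-S_k}{S_kt_k}AX_k. \]
Here $|AX_k|/S_k\le C$ deterministically, since $|AX_k|\le C\sum_{i:a_i>0}a_iX_{k,i}$; coordinates with $a_i=0$ do not enter $AX_k$, because the corresponding columns of $A$ vanish. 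Combined with part (ii) this gives $\|R_k\1\|_p\le C_p k^{-1/2}$, so
\[ \sum_k \|P_\lambda F_{k+1,n}\|\,k^{-1/2}\le C\sqrt n \quad\text{when } \Re\lambda<b, \]
and this is within every bound in \eqref{eq:lambdaBound}. For $\lambda=\lambda_1$ the noise term does not need separate treatment, since (ii) already covers it. Because $\|P_\lambda F\|$ with $\Re\lambda<b$ grows more slowly than $n/k$, the sum converges to $O(\sqrt n)$, with a possible logarithm only when $\Re\lambda = b/2$, which the stated bound tolerates. Finally, the deterministic term $F_{0,n}X_0$ cancels upon centering, and the bias $\E[\cdot\mid\Ext^c_n]$ of the remaining terms is controlled by the same $L^p$ bounds. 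This gives \eqref{eq:lambdaBound}.
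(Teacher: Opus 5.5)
Your route is the paper's. Your $t_n$ is its $\omega_n$ and your $R_k$ is its $Z_{k+1}$. The Burkholder bound on $S_n-t_n$ is Lemma~\ref{lem:Burk}, and the martingale sum is Lemma~\ref{lem:YBound}. Feeding $\|R_k\|_p\le C_pk^{-1/2}$ into Lemma~\ref{lem:6.1} is Lemma~\ref{lem:ZBound}; your pointwise bound $|AX_k|\le CS_k$ is a clean substitute for the paper's conditional Jensen step.

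One estimate is wrong, however: $\sum_k\|P_\lambda F_{k+1,n}\|\,k^{-1/2}$ is not $O(\sqrt n)$ for all $\Re\lambda<b$. With $\gamma=\Re\lambda/b$, Lemma~\ref{lem:6.1} gives
\[ \sum_{k=1}^{n-1}\|P_\lambda F_{k+1,n}\|\,k^{-1/2}\le C\,n^{\gamma}\sum_{k=1}^{n}k^{-\gamma-1/2}\bigl(1+\log(n/k)\bigr)^{\nu_\lambda}. \]
This is $O(\sqrt n)$ for $\gamma<1/2$, $O(\sqrt n\,(\log n)^{\nu_\lambda+1})$ for $\gamma=1/2$, and $O(n^{\gamma}(\log n)^{\nu_\lambda})$ for $\gamma>1/2$. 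The last case is sharp: the $k=1$ term $\|P_\lambda F_{2,n}\|$ is at least its spectral radius $\prod_{2\le j<n}|1+\lambda/t_j|\asymp n^{\gamma}$. So ``grows more slowly than $n/k$'' only gives $O(n)$, not $O(\sqrt n)$. Your conclusion survives, because $n^{\Re\lambda/b}(\log n)^{\nu_\lambda}$ is exactly what the third case permits, but you need this three-case computation.

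The same computation with $\gamma=1$ is also what handles $\lambda=\lambda_1$ in (i) when $\lambda_1$ is not simple. Deferring to (ii) covers only the simple case. It also works only against the intended exponent $\Re\lambda/\lambda_1$ used in Lemmas~\ref{lem:YBound} and~\ref{lem:ZBound}. With the printed $\Re\lambda_2$, the case $\lambda=\lambda_1$ would assert an $o(\sqrt n)$ bound when $\Re\lambda_2<\lambda_1/2$, which fails e.g.\ for the freezing urn of Section~\ref{sec:freezing}.

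On the centering you are actually more careful than the paper. Your factor-$2$ claim is correct and needs no lower bound on $\P(\Ext^c_n)$. By H\"older, $|\E[V\1_E]|\le\|V\1_E\|_p\,\P(E)^{1-1/p}$, hence $\|(V-\E[V\mid E])\1_E\|_p\le 2\|V\1_E\|_p$. So the extra assumption in (ii) that survival has positive probability can be dropped; it holds anyway under $\U{p}$, as the paper notes via the branching-process embedding.

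Moreover, in (ii) you rightly do not identify $\E[S_n\mid\Ext^c_n]$ with $t_n$. The paper does so in \eqref{eq:omegaDef} and uses it in \eqref{eq:4.1proof1}, but this identity is false when extinction is possible. Since $S_n\1_{\Ext^c_n}=S_n$ and $\E[a\cdot\Delta X_k]=b\,\P(\Ext^c_k)$, one gets $\E[S_n\mid\Ext^c_n]=\bigl(S_0+b\sum_{k<n}\P(\Ext^c_k)\bigr)/\P(\Ext^c_n)>t_n$ whenever $\P(\Ext^c_n)<1$. The reason is that conditioning on $\Ext^c_n$ biases $a\cdot\Delta X_\ell$ for $\ell<n$. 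Bounding $(S_n-t_n)\1_{\Ext^c_n}$ by Burkholder and then paying the factor $2$ for recentering, as you do, is what actually proves \eqref{eq:lambda1Bound}.
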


\bigskip

\noindent{\bf A note on extinction.} A method used to study P\'olya urns is to embed the process into a continuous time Markov branching process, notably in \cite{ATKA:68}, \cite[Sec. V.9]{ATNE:72}, and \cite{JANS:04}. The process $\mathcal{X}(t) = (\mathcal{X}_1(t), \ldots, \mathcal{X}_q(t))$ is defined according to the same parameters $a_1, \ldots, a_q$ and $\xi_1, \ldots, \xi_q$. A particle of type $i$ lives for an exponentially distributed time with mean $a_i^{-1}$, independently of all other particles. After it dies, the particle is replaced with a number of particles distributed as $(\xi_i + \delta_{i,j})_{j=1}^q$ independently from everything that has occurred so far. Each new particle of type $j$ is assigned a new lifespan exponentially distributed with mean $a_j^{-1}$. The process is defined to be right-continuous. Define $\tau_n$ to be the time of the $n$-th birth event; then $(\mathcal{X}(\tau_n))_{n=0}^\infty$ is equal in distribution to $(X_n)_{n=0}^\infty$. As is well-known, the condition $\lambda_1 = b > 0$ guarantees that $\mathcal{X}(t)$ has positive probability of non-extinction, meaning that $\mathcal{X}(t)$ contains particles with positive activity for all $t \in \R$ \cite[Sec. V.7]{ATNE:72}.

From the defined embedding, we see that the probability of non-extinction is positive for $(X_n)_{n=0}^\infty$ given our assumption that $\lambda_1 = b > 0$, see also the discussion contained in \cite[Sec. 2]{JANS:04}. We have also defined $(\Ext_n^c)_{n=0}^\infty$ to be a nested sequence of sets (with $\Ext_{n+1}^c \subseteq \Ext_{n}^c$), and so
\begin{equation}\label{eq:ExtinctionProb}
\lim_{n \to \infty} \E \left[\1_{\Ext_n^c}\right] = \lim_{n \to \infty} \P\left(\Ext_n^c\right) =  \P\left(\bigcap_{n=0}^\infty \Ext_n^c\right) > 0.
\end{equation}
From our definition of non-extinction \eqref{eq:ExtinctDef} and from the definition of $S_n$ \eqref{eq:SDef}, an extinct urn may still contain balls with activity zero. Since the columns of $A$ corresponding to balls with activity zero are also zero, we see that 
\begin{equation}\label{eq:aExtinct}
S_n \1_{\Ext_n^c} = a \cdot X_n \1_{\Ext_n^c} = a \cdot X_n = S_n, \qquad \text{and} \qquad AX_n \1_{\Ext_n^c} = A X_n.
\end{equation}

\bigskip

\noindent{\bf Decomposition of $X_n$.} We will be evaluating martingales and martingale difference sequence, and so will be encountering conditional expectations with respect to $\sigma$-fields. To avoid confusion for the remainder of this section up to Section \ref{sec:proofs}, we will denote the expectation of a random variable $X$ conditioned on the {\em event} of non-extinction up to time $n$ by 
\begin{equation}\label{eq:CondExpDef}
\E_{\Ext_n^c}\left[ X \right] := \E\left[ X | \Ext_n^c\right] = \frac{\E\left[ X \1_{\Ext_n^c}\right]}{\E\left[\1_{\Ext_n^c}\right]},\end{equation}
and reserve the notation $\E\left[X | \mathcal{F}\right]$ for conditional expectation with respect to the {\em $\sigma$-field} $\mathcal{F}$. From \eqref{eq:ExtinctionProb}, we see that \eqref{eq:CondExpDef} is well-defined since the denominator is never zero. 

\medskip

Throughout the following discussion before the statement of lemmas, assume $\U{p}$ holds for some $p \geq 2$. Let $\mathcal{F}_n$ be the $\sigma$-field generated by $X_1, \ldots, X_n$. Let $I_n$ be the colour of the $n$-th drawn ball if $S_{n-1} > 0$. Recall that no ball is selected if $S_{n-1} = 0$ (i.e., if the urn is extinct), and so for all $n$,
\begin{equation}\label{eq:SelectNonExt}
\sum_{j=1}^q \P(I_{n+1} = j | \mathcal{F}_n) = \1_{\Ext^c_n}.
\end{equation}
Then by \eqref{eq:BallProbab}
\[ \P(I_{n+1} = j | \mathcal{F}_n) =\frac{a_j X_{nj}}{S_n}\1_{\Ext^c_n},\]
and recalling $\Delta X_n \sim \xi_j$ if the $j$'th ball is selected,
\begin{equation}\label{eq:DeltaFn} 
\E\left[\Delta X_n| \mathcal{F}_n\right] = \sum_{j=1}^q \P(I_{n+1} = j | \mathcal{F}_n)\E\xi_j = \frac{\1_{\Ext^c_n}}{S_n}\sum_{j=1}^q a_j X_{n,j} \E \xi_j = \frac{1}{S_n}A X_n\1_{\Ext^c_n},
\end{equation}
while by assumption $\U{p}$,
\begin{equation}\label{eq:ExpBoundDelta}
\E\left[|\Delta X_n |^p | \mathcal{F}_n\right] = \sum_{j=1}^q \P(I_{n+1} = j | \mathcal{F}_n) \E | \xi_j|^p \leq C_p \1_{\Ext^c_n},
\end{equation}
and so 
\begin{equation}\label{eq:BoundDelta}
\left\lVert \Delta X_n \right\rVert_p \leq C_p.
\end{equation}
Recalling the vector of activities $a$, \eqref{eq:iidBalancedDef}, \eqref{eq:SelectNonExt} and \eqref{eq:DeltaFn} yield
\begin{equation}\label{eq:adotDelta}
\E\left[ a \cdot \Delta X_n | \mathcal{F}_n\right] = \sum_{j=1}^q \P\left(I_{n+1} = j | \mathcal{F}_n\right) a \cdot \E \xi_j = b \1_{\Ext_n^c}, 
\end{equation}
and so by \eqref{eq:aExtinct}, $\E \left[ a \cdot \Delta X_n\right] = \E\left[ a \cdot \Delta X_n \1_{\Ext_n^c}\right] = \E\left[ b \1_{ \Ext_n^c}\right]$. 
Conditioning on non-extinction, 
\begin{equation}\label{eq:CondadotDelta}
\E_{\Ext_n^c} \left[ a \cdot \Delta X_n\right] = \frac{\E\left[a \cdot \Delta X_n \1_{ \Ext_n^c}\right]}{\E \left[\1_{\Ext_n^c}\right]} = b.
\end{equation}
From \eqref{eq:SDef} and \eqref{eq:update}, 
\begin{equation}\label{eq:SnasSum}
S_n = a \cdot \left( X_0 + \sum_{\ell=0}^{n-1} \Delta X_\ell\right) = a \cdot X_0 + \sum_{\ell=0}^{n-1} a \cdot \Delta X_\ell,
\end{equation}
which we use to find the expected total activity of the urn conditioned on non-extinction at time $n$: 
\begin{equation}\label{eq:omegaDef}
\omega_n := \E_{\Ext_n^c} \left[ S_n \right] = a \cdot X_0 + \sum_{\ell=0}^{n-1} \E_{\Ext_n^c}  \left[ a \cdot \Delta X_\ell \right] = a \cdot X_0 + n b.
\end{equation}
Define 
\begin{align}
Y_n &:= \Delta X_{n-1} - \E\left[\Delta X_{n-1} | \mathcal{F}_{n-1}\right], \label{eq:YDef}\\
Z_n &:= \left(\frac{\omega_{n-1} - S_{n-1}}{\omega_{n-1}} \right)\E\left[\Delta X_{n-1} | \mathcal{F}_{n-1}\right]. \label{eq:ZDef}
\end{align}
Noting from \eqref{eq:aExtinct} that $AX_n = A X_n \1_{\Ext_n^c}$, we apply \eqref{eq:DeltaFn} and decompose $\Delta X_n$ as 
\[ \Delta X_n = Y_{n+1} + Z_{n+1} + \omega_n^{-1}A X_n.\]
By \eqref{eq:update} and induction we have 
\[X_n = X_{n-1} + \Delta X_{n-1}= \left(I + \omega_{n-1}^{-1} A\right) X_{n-1} + Y_n + Z_n = \prod_{k=0}^{n-1}\left(I + \omega_k^{-1}A\right) X_0 + \sum_{\ell = 1}^n \prod_{k=\ell}^{n-1}\left(I + \omega_k^{-1} A\right)\left(Y_\ell + Z_\ell\right).\]
By defining  
\begin{equation}\label{eq:FDef}
F_{i,j} := \prod_{i \leq k < j}(I+\omega_k^{-1} A),
\end{equation}
we are left with the decomposition of $X_n$ as 
\begin{equation}\label{eq:XDecomp}
X_n = F_{0,n} X_0 + \sum_{\ell = 1}^n F_{\ell,n}\left(Y_\ell + Z_\ell\right).
\end{equation}
As discussed in the introduction, the addition of the `noisy' parts $Z_\ell$ in \eqref{eq:XDecomp} differentiates the balanced in expectation case from the decomposition appearing in the balanced case of previous works.

For any $P_\lambda$, we therefore have 
\begin{equation}\label{eq:ProjXDecomp}
P_\lambda X_n = P_\lambda F_{0,n} X_0 + \sum_{\ell=1}^n P_\lambda F_{\ell,n} Y_\ell + \sum_{\ell=1}^n P_\lambda F_{\ell,n} Z_\ell.
\end{equation}
Since $P_\lambda F_{0,n} X_0$ is constant, $\E_{\Ext_n^c}\left[P_\lambda F_{0,n} X_0\right] = P_\lambda F_{0,n} X_0$. Thus, we see that 
\begin{equation}\label{eq:ProjX-EXDecomp}
P_\lambda\left(X_n - \E_{\Ext_n^c}\left[ X_n\right] \right) = \sum_{\ell=1}^n P_\lambda F_{\ell,n} Y_\ell - \sum_{\ell=1}^n P_\lambda F_{\ell,n} \E_{\Ext_n^c}\left[ Y_\ell\right] + \sum_{\ell=1}^n P_\lambda F_{\ell,n} Z_\ell  - \sum_{\ell=1}^n P_\lambda F_{\ell,n} \E_{\Ext_n^c} \left[ Z_\ell \right].
\end{equation}

\bigskip 

\noindent{\bf Bounding terms of the decomposition.} The definition of $F_{i,j}$ in \eqref{eq:FDef} is precisely the same as in \cite{JANS:20} and \cite{JANS:25}, and we use the following bound on the norms of $P_\lambda F_{i,j}$:
\begin{lemma}{\cite[Lemma 6.1]{JANS:25}}\label{lem:6.1}
For every eigenvalue $\lambda \in \sigma(A),$
\[ \left\lVert P_\lambda F_{\ell,n} \right\rVert \leq C\left(\frac{n}{\ell}\right)^{\Re \lambda/b} \left(1 + \log\left(\frac{n}{\ell}\right)\right)^{\nu_\lambda}, \qquad 1 \leq \ell \leq n < \infty.\]
\end{lemma}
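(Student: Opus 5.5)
The plan is to reduce the matrix product to a scalar product times a polynomial in the nilpotent part $N_\lambda$, and then estimate each factor directly.

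\textbf{Factoring the product.} Since $A$ commutes with $P_\lambda$ and $P_\lambda$ is idempotent, \eqref{eq:ProjNil} gives $P_\lambda(I+\omega_k^{-1}A) = P_\lambda\bigl((1+\lambda\omega_k^{-1})I + \omega_k^{-1}N_\lambda\bigr)$. All these factors commute, so
\[ P_\lambda F_{\ell,n} = P_\lambda \prod_{\ell\le k<n}\bigl((1+\lambda\omega_k^{-1})I + \omega_k^{-1}N_\lambda\bigr). \]
Here I use $\omega_k = a\cdot X_0 + kb$ as in \eqref{eq:omegaDef}; this is deterministic, satisfies $\omega_k\ge \omega_0>0$, and grows linearly in $k$.

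\textbf{Discarding an initial segment.} Fix $k_0$, depending only on the urn, such that $|\lambda|/\omega_k\le 1/2$ for all $k\ge k_0$. For $\ell<k_0$, split the product at $k_0$. The factors with $\ell\le k<k_0$ are finitely many, each of norm at most $C$. Moreover, for $\ell<k_0$ the quantity $n/\ell$ is comparable to $n/k_0$ up to constant factors. Hence it suffices to prove the bound for $\ell\ge k_0$. This also avoids the possible vanishing of $1+\lambda/\omega_k$ when $\lambda$ is negative real.

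\textbf{Estimating the two parts for $\ell\ge k_0$.} For such $\ell$, factor each term as $(1+\lambda\omega_k^{-1})\bigl(I+(\omega_k+\lambda)^{-1}N_\lambda\bigr)$.

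\emph{Scalar part.} Using $\log|1+z| = \Re z + O(|z|^2)$ for $|z|\le 1/2$,
\[ \prod_{\ell\le k<n}|1+\lambda\omega_k^{-1}| = \exp\Bigl(\Re\lambda\sum_{\ell\le k<n}\omega_k^{-1} + O\Bigl(\sum_{k\ge \ell}\omega_k^{-2}\Bigr)\Bigr). \]
Comparing with an integral gives $\sum_{\ell\le k<n}(a\cdot X_0+kb)^{-1} = b^{-1}\log(n/\ell)+O(1/\ell)$. The scalar product is therefore at most $C(n/\ell)^{\Re\lambda/b}$.

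\emph{Nilpotent part.} Since $N_\lambda^{\nu_\lambda+1}=0$, expanding the product gives
\[ \prod_{\ell\le k<n}\bigl(I+(\omega_k+\lambda)^{-1}N_\lambda\bigr)=\sum_{j=0}^{\nu_\lambda} c_j N_\lambda^j, \]
where $c_j$ is the $j$-th elementary symmetric sum of the numbers $(\omega_k+\lambda)^{-1}$. It follows that
\[ |c_j| \le \frac{1}{j!}\Bigl(\sum_{\ell\le k<n}|\omega_k+\lambda|^{-1}\Bigr)^j \le C\bigl(1+\log(n/\ell)\bigr)^j, \]
using $|\omega_k+\lambda|\ge \omega_k/2$ and the same harmonic-sum estimate. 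Hence the operator norm of the nilpotent factor is at most $C(1+\log(n/\ell))^{\nu_\lambda}$.

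\textbf{Conclusion and main difficulty.} Multiplying the scalar and nilpotent bounds, together with $\|P_\lambda\|\le C$, gives the claimed estimate. The only delicate step is the uniformity in $\ell$. The error terms must be $O(1)$ uniformly in $\ell$, and small $\ell$ requires the initial-segment treatment above. I expect this bookkeeping to be the main obstacle; the rest is routine once the harmonic-sum estimate for $\omega_k^{-1}$ is in place.
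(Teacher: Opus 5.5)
Your argument is correct, but it follows a different route from the cited lemma. The paper gives no proof of its own and imports the statement from Janson.

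There, one regards $F_{\ell,n}=f_{\ell,n}(A)$ for the polynomial $f_{\ell,n}(z)=\prod_{\ell\le k<n}(1+z/\omega_k)$. One then uses the functional-calculus identity $P_\lambda f_{\ell,n}(A)=\sum_{m=0}^{\nu_\lambda}\frac{1}{m!}f^{(m)}_{\ell,n}(\lambda)\,N_\lambda^m P_\lambda$. Next one proves $|f_{\ell,n}(z)|\le C(n/\ell)^{\Re z/b}$ uniformly for $z$ in a neighbourhood of $\lambda$ (e.g.\ via the Gamma-function form of $f_{\ell,n}$). Finally, Cauchy's estimates on a circle of radius of order $1/(1+\log(n/\ell))$ give $|f^{(m)}_{\ell,n}(\lambda)|\le C(n/\ell)^{\Re\lambda/b}(1+\log(n/\ell))^m$.

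Your factorization computes the same Taylor coefficients explicitly. Since $f_{\ell,n}(\lambda+t)=f_{\ell,n}(\lambda)\prod_k\bigl(1+t/(\omega_k+\lambda)\bigr)$, one has $f^{(j)}_{\ell,n}(\lambda)/j!=f_{\ell,n}(\lambda)\,c_j$. Your inequality $|c_j|\le(\sum_k|\omega_k+\lambda|^{-1})^j/j!$ then plays the role of Cauchy's estimates.

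Your version is entirely elementary: it needs only a harmonic-sum estimate and $\log|1+z|=\Re z+O(|z|^2)$. Its price is the division by $\omega_k+\lambda$, which is exactly what forces your initial-segment cut. The analytic version never divides. It treats the scalar and nilpotent parts in one stroke and handles small $\ell$ simply by bounding finitely many polynomial factors on a compact set.

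One case you leave implicit is $n<k_0$, where you cannot split at $k_0$. There $P_\lambda F_{\ell,n}$ has fewer than $k_0$ factors, so its norm is at most $C$. Since also $(n/\ell)^{\Re\lambda/b}\ge k_0^{-|\Re\lambda|/b}$, the bound is immediate.
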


The bounds for the terms in \eqref{eq:ProjXDecomp} containing $Y_n$ are treated exactly the same as in \cite{JANS:25}. 

\begin{lemma}\label{lem:YBound}
Assume $\U{p}$ for some $p \geq 2$. Then $(Y_n)_{n=1}^\infty$ is a martingale difference sequence with $\left\lVert Y_n \right\rVert_p < C_p$, and for all $\lambda \in \sigma(A)$ and $n \geq 2$, 
\[ \left\Vert \sum_{\ell = 1}^n P_\lambda F_{\ell,n} Y_\ell \right\Vert_p \leq 
\begin{cases}
C_p n^{1/2} & \Re \lambda < \lambda_1/2 \\
C_p n^{1/2}(\log n)^{\nu_\lambda + \frac{1}{2}} & \Re \lambda = \lambda_1/2 \\
C_p n^{\Re \lambda/\lambda_1}(\log{n})^{\nu_\lambda} & \Re \lambda > \lambda_1/2.
\end{cases}\]
\end{lemma}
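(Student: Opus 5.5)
The plan is to follow the argument of \cite{JANS:25} for balanced urns. The only new feature here is the possibility of extinction, and it does not interfere.

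\emph{Martingale difference property and moment bound.} By \eqref{eq:YDef}, $Y_n$ is $\mathcal{F}_n$-measurable and $\E[Y_n|\mathcal{F}_{n-1}]=0$ by construction, so $(Y_n)$ is a martingale difference sequence. For the moment bound, use Minkowski's inequality and conditional Jensen:
\[
\lVert Y_n\rVert_p\le \lVert \Delta X_{n-1}\rVert_p+\lVert \E[\Delta X_{n-1}|\mathcal{F}_{n-1}]\rVert_p\le 2\lVert \Delta X_{n-1}\rVert_p .
\]
This is at most $C_p$ by \eqref{eq:BoundDelta}. Extinction plays no role in this step: on the extinction event $\Delta X_{n-1}=0$, so $Y_n=0$ there.

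\emph{Bound on the weighted sum.} Fix $n$. The matrices $P_\lambda F_{\ell,n}$ are deterministic, because $\omega_k=a\cdot X_0+kb$ is deterministic. Hence $\ell\mapsto P_\lambda F_{\ell,n}Y_\ell$, for $1\le \ell\le n$, is again a martingale difference sequence with respect to $(\mathcal{F}_\ell)$. Apply Burkholder's inequality, then Minkowski's inequality in $L^{p/2}$ (valid since $p\ge2$):
\[
\Big\lVert \sum_{\ell=1}^n P_\lambda F_{\ell,n}Y_\ell\Big\rVert_p\le C_p\Big\lVert \sum_{\ell=1}^n |P_\lambda F_{\ell,n}Y_\ell|^2\Big\rVert_{p/2}^{1/2}\le C_p\Big(\sum_{\ell=1}^n \lVert P_\lambda F_{\ell,n}\rVert^2\lVert Y_\ell\rVert_p^2\Big)^{1/2}.
\]
For vector-valued martingales, apply Burkholder componentwise; this only changes the constant. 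Now insert $\lVert Y_\ell\rVert_p\le C_p$ and Lemma \ref{lem:6.1}. With $b=\lambda_1$, this reduces the claim to estimating
\[
\sum_{\ell=1}^n \Big(\frac{n}{\ell}\Big)^{2\Re\lambda/\lambda_1}\Big(1+\log\frac{n}{\ell}\Big)^{2\nu_\lambda}.
\]

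\emph{The elementary sum.} Compare the sum with $n\int_{1/n}^1 x^{-2\Re\lambda/\lambda_1}(1+\log(1/x))^{2\nu_\lambda}\,dx$. There are three cases.
\begin{itemize}
\item If $2\Re\lambda<\lambda_1$, the integral over $(0,1]$ converges, so the sum is $O(n)$.
\item If $2\Re\lambda=\lambda_1$, each term is $(n/\ell)(1+\log(n/\ell))^{2\nu_\lambda}\le C(n/\ell)(\log n)^{2\nu_\lambda}$ for $n\ge2$. The harmonic sum $\sum_{\ell=1}^n 1/\ell$ contributes one further factor of $\log n$, so the sum is $O(n(\log n)^{2\nu_\lambda+1})$.
\item If $2\Re\lambda>\lambda_1$, the terms with $\ell$ small dominate. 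Substitute $\ell=m$; then $\sum_m m^{-2\Re\lambda/\lambda_1}(1+\log n)^{2\nu_\lambda}$ converges in $m$, so the sum is $O(n^{2\Re\lambda/\lambda_1}(\log n)^{2\nu_\lambda})$.
\end{itemize}
Taking square roots gives exactly the three bounds in the statement, including the exponent $\nu_\lambda+\tfrac12$ on the logarithm in the critical case.

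The only point needing care is the application of Burkholder's inequality. It relies on $P_\lambda F_{\ell,n}$ being deterministic, which holds because $\omega_n$ is a deterministic normalisation. This is precisely why the decomposition \eqref{eq:XDecomp} uses $\omega_n$ rather than $S_n$. Everything else is routine.
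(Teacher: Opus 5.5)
Your proof is correct and follows essentially the same route as the paper. The paper checks the martingale-difference property and $\lVert Y_n\rVert_p\le C_p$ exactly as you do, and then simply invokes Lemmas 5.1 and 6.2 of \cite{JANS:25}; those lemmas package your Burkholder--Minkowski bound for deterministic matrix weights and the resulting estimate of $\sum_{\ell}\lVert P_\lambda F_{\ell,n}\rVert^2$ via Lemma \ref{lem:6.1}.

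One minor point: when $\Re\lambda<0$ the summand is not monotone in $\ell$, so instead of the integral comparison just note that it is bounded; the $O(n)$ conclusion is unaffected.
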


\begin{proof}
By definition \eqref{eq:YDef} we see that $Y_n$ is $\mathcal{F}_n$-measurable and $\E\left[Y_n | \mathcal{F}_{n-1}\right] = 0$, and so $(Y_n)_{n=1}^\infty$ is a martingale difference sequence, while $\left\lVert Y_n \right\rVert_p \leq C_p$ follows from \eqref{eq:BoundDelta}. The result then follows from Lemma \cite[Lemma 5.1]{JANS:25} and \cite[Lemma 6.2]{JANS:25}.
\end{proof}

To both prove Theorem \ref{thm:momentBounds} (ii) and to bound the terms in \eqref{eq:ProjXDecomp} containing $Z_n$, we bound the $L^p$-norm of $(S_n - \omega_n)\1_{\Ext_n^c}$.

\begin{lemma}\label{lem:Burk}
Assume $\U{p}$ holds for some $p \geq 2$. Then for all $n \geq 2$, 
\[ \left\lVert (S_n - \omega_n) \1_{\Ext^c_n} \right\rVert_p \leq C_p \sqrt{n}.\]
\end{lemma}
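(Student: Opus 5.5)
We write $S_n - \omega_n$ as a martingale plus a correction coming from extinction, and then apply Burkholder's inequality to the martingale part. By \eqref{eq:SnasSum} and \eqref{eq:omegaDef},
\[ S_n - \omega_n = \sum_{\ell=0}^{n-1}\left(a\cdot \Delta X_\ell - b\right). \]
By \eqref{eq:adotDelta}, $\E[a\cdot\Delta X_\ell\,|\,\mathcal{F}_\ell] = b\1_{\Ext^c_\ell}$. Define the martingale differences
\[ D_{\ell+1} := a\cdot\Delta X_\ell - b\1_{\Ext^c_\ell} = a\cdot Y_{\ell+1}, \]
which satisfy $\lVert D_{\ell+1}\rVert_p\le C_p$ by Lemma~\ref{lem:YBound}. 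Then
\[ S_n - \omega_n = \sum_{\ell=1}^{n} D_\ell - b\sum_{\ell=0}^{n-1}\1_{\Ext_\ell}, \]
where $\Ext_\ell$ is the complement of $\Ext^c_\ell$.

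Now multiply by $\1_{\Ext^c_n}$. Since the sets $\Ext^c_\ell$ are nested, we have $\Ext^c_n\subseteq\Ext^c_\ell$ for all $\ell\le n$, so $\1_{\Ext_\ell}\1_{\Ext^c_n}=0$ for $\ell\le n-1$. The correction term therefore vanishes on non-extinction, and
\[ (S_n-\omega_n)\1_{\Ext^c_n} = \1_{\Ext^c_n}\sum_{\ell=1}^n D_\ell . \]
This exact cancellation is what makes the conditioning on $\Ext^c_n$ harmless. Bounding the indicator by $1$ gives
\[ \lVert (S_n-\omega_n)\1_{\Ext^c_n}\rVert_p \le \Bigl\lVert \sum_{\ell=1}^n D_\ell\Bigr\rVert_p . \]

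It remains to bound this martingale in $L^p$. By Burkholder's inequality,
\[ \Bigl\lVert \sum_{\ell=1}^n D_\ell\Bigr\rVert_p \le C_p \Bigl\lVert \sum_{\ell=1}^n D_\ell^2\Bigr\rVert_{p/2}^{1/2}. \]
Since $p\ge2$, Minkowski's inequality in $L^{p/2}$ gives
\[ \Bigl\lVert \sum_{\ell=1}^n D_\ell^2\Bigr\rVert_{p/2} \le \sum_{\ell=1}^n \lVert D_\ell\rVert_p^2 \le C_p n . \]
Combining these yields the bound $C_p\sqrt n$.

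The only delicate points are as follows. First, the bound $\lVert D_\ell\rVert_p\le C_p$ comes from \eqref{eq:BoundDelta} together with $|a\cdot v|\le |a||v|$. Second, the extinction bookkeeping must be checked: the definition \eqref{eq:omegaDef} uses the conditional expectation given $\Ext^c_n$, which gives exactly $nb$, whereas the pathwise compensator is $b\sum_\ell\1_{\Ext^c_\ell}$. The discrepancy between the two is supported on extinction events and is killed by the indicator $\1_{\Ext^c_n}$, as shown above.
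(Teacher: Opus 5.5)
Your proof is correct and is essentially the paper's own argument. Your $D_\ell$ coincides with the paper's $W_\ell = a\cdot Y_\ell$ (they agree because $\Delta X_{\ell-1}=0$ off $\Ext^c_{\ell-1}$), the nestedness $\Ext^c_n\subseteq\Ext^c_\ell$ kills the extinction correction in the same way, and Burkholder followed by Minkowski in $L^{p/2}$ gives $C_p\sqrt{n}$; note only that your side remark that $\E[S_n \mid \Ext^c_n]$ equals $a\cdot X_0+nb$ exactly is not needed (both proofs use only this closed form) and is false in general (for the freezing urn with attachment probability $p<1$, $\E[S_1\mid\Ext^c_1]=2$ whereas $a\cdot X_0+b=2p$), though the same martingale bound shows the discrepancy is $O(\sqrt{n})$, so the lemma holds under either reading of $\omega_n$.
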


\begin{proof}
Define the sequence of random variables $(W_n)_{n=1}^\infty$ by 
\begin{equation}\label{eq:Burk1}
W_n := a \cdot Y_n = \left(a \cdot \Delta X_{n-1} - b \right)\1_{\Ext_{n-1}^c},
\end{equation}
where the second equation follows \eqref{eq:adotDelta} and \eqref{eq:YDef}. 
From Lemma \ref{lem:YBound}, we see that $(W_n)_{n=1}^\infty$ is a martingale difference sequence with $\left\lVert W_n \right\rVert_p \leq C_p$. Then $\sum_{\ell = 1}^n W_n$ is a martingale, and Burkholder's inequality \cite[Theorem 9]{BURK:66} (see also \cite[Theorem 10.9.5]{GUT}) implies the existence of a constant $C_p$ depending only on $p$ such that 
\begin{equation}\label{eq:Burk2}
\left\lVert \sum_{\ell = 1}^n W_\ell \right\rVert_p \leq C_p \left\lVert \left( \sum_{\ell=1}^n W_\ell^2 \right)^{1/2}\right\rVert_p.\end{equation}
From Minkowski's inequality,
\begin{equation}\label{eq:Burk3}
\left\lVert \left( \sum_{\ell=1}^n W_\ell^2 \right)^{1/2}\right\rVert_p^2 = \left\lVert \sum_{\ell=1}^n W_\ell^2 \right\rVert_{p/2} \leq \sum_{\ell=1}^n \lVert W_\ell^2 \rVert_{p/2} = \sum_{\ell=1}^n \lVert W_\ell \rVert_p^2.
\end{equation}
From \eqref{eq:BoundDelta}, \eqref{eq:YDef}, and \eqref{eq:Burk1}, $\left\Vert W_n \right\rVert_p \leq C_p$, and so by \eqref{eq:Burk2} and \eqref{eq:Burk3}, 
\begin{equation}\label{eq:SnMartBound}
\left\lVert \sum_{\ell = 1}^n W_\ell \right\rVert_p \leq C_p\sqrt{n}.
\end{equation}
Since $\Ext_n^c \subseteq \Ext_\ell^c$ for all $\ell \leq n$, it follows from \eqref{eq:SnasSum} and \eqref{eq:omegaDef} that 
\begin{align*}
\left(S_n - \omega_n\right)\1_{\Ext_n^c} &= \left( \left(a \cdot X_0 + \sum_{\ell = 0}^{n-1} a \cdot \Delta X_\ell\right) - \left(a \cdot X_0 + n b\right)\right)\1_{\Ext_n^c} \\
&= \left(\sum_{\ell=0}^{n-1}\left(a \cdot \Delta X_\ell - b \right)\1_{\Ext_\ell^c}\right)\1_{\Ext_n^c} \\
&= \left(\sum_{\ell =1}^n W_\ell \right)\1_{\Ext_n^c}.
\end{align*}
The result now follows from \eqref{eq:SnMartBound} since 
\[ \left\lVert (S_n - \omega_n)\1_{\Ext_n^c}\right\rVert_p = \left\lVert \left(\sum_{\ell=1}^n W_\ell \right)\1_{\Ext_n^c}\right\rVert_p \leq \left\lVert \sum_{\ell=1}^n W_\ell \right\rVert_p \leq C_p\sqrt{n}.\]
\end{proof}

Finally, we prove the bounds for the terms of \eqref{eq:ProjXDecomp} containing $Z_n$.

\begin{lemma}\label{lem:ZBound}
Assume $\U{p}$ for some $p \geq 2$. For all $\lambda \in \sigma(A)$ and $n \geq 2$,  
\[ \left\Vert \sum_{\ell=1}^n P_\lambda F_{\ell,n} Z_\ell \right\Vert_p \leq 
\begin{cases}
C_p n^{1/2} & \Re \lambda < \lambda_1/2 \\
C_p n^{1/2}(\log n)^{1+\nu_\lambda} & \Re \lambda = \lambda_1/2 \\
C_p n^{\Re \lambda/\lambda_1}(\log{n})^{\nu_\lambda} & \Re \lambda > \lambda_1/2.
\end{cases}\]
\end{lemma}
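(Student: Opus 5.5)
We bound the sum crudely by Minkowski's inequality, term by term, using only the size of each $Z_\ell$; no martingale structure is needed. By \eqref{eq:ZDef} and \eqref{eq:DeltaFn},
\[ Z_\ell = \frac{\omega_{\ell-1} - S_{\ell-1}}{\omega_{\ell-1}}\cdot\frac{1}{S_{\ell-1}}A X_{\ell-1}\1_{\Ext^c_{\ell-1}}. \]
The first step is a deterministic bound on $|AX_{\ell-1}|/S_{\ell-1}$ on $\Ext^c_{\ell-1}$. Only columns with $a_j>0$ contribute to $A X$, so $|AX| \le C\sum_{j:a_j>0} X_j \le C\sum_j a_j X_j/\min_{a_j>0}a_j = C S$. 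Hence $|\E[\Delta X_{\ell-1}\mid\mathcal F_{\ell-1}]| \le C\1_{\Ext^c_{\ell-1}}$, and therefore
\[ |Z_\ell| \le C\,\omega_{\ell-1}^{-1}\,|S_{\ell-1}-\omega_{\ell-1}|\1_{\Ext^c_{\ell-1}}. \]
Here the indicator may be taken at time $\ell-1$. This matters: Lemma~\ref{lem:Burk} controls exactly $\|(S_{\ell-1}-\omega_{\ell-1})\1_{\Ext^c_{\ell-1}}\|_p \le C_p\sqrt{\ell}$ for $\ell-1\ge 2$. The first few indices $\ell\le 3$ are handled directly, since there $S$ is bounded in $L^p$ by \eqref{eq:BoundDelta}. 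Since $\omega_{\ell-1} = a\cdot X_0 + (\ell-1)b \ge c\,\ell$, we obtain $\|Z_\ell\|_p \le C_p \ell^{-1/2}$.

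Next we combine this with Lemma~\ref{lem:6.1} and Minkowski:
\[ \Big\|\sum_{\ell=1}^n P_\lambda F_{\ell,n}Z_\ell\Big\|_p \le C_p\sum_{\ell=1}^n \Big(\frac{n}{\ell}\Big)^{\Re\lambda/b}\Big(1+\log\frac n\ell\Big)^{\nu_\lambda}\ell^{-1/2}. \]
We then evaluate this sum in three cases, writing $\alpha=\Re\lambda/b$ and recalling $b=\lambda_1$.
\begin{itemize}
\item If $\alpha<1/2$, substitute $x=\ell/n$. The sum is $n^{1/2}\cdot\frac1n\sum_\ell x^{-\alpha-1/2}(1+\log(1/x))^{\nu_\lambda}$. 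This is a Riemann sum for an integrable function on $(0,1]$, so it is bounded by $C\sqrt n$. Monotonicity handles the comparison, possibly after splitting the range.
\item If $\alpha=1/2$, the sum is $n^{1/2}\sum_\ell \ell^{-1}(1+\log(n/\ell))^{\nu_\lambda} \le C n^{1/2}(\log n)^{\nu_\lambda+1}$.
\item If $\alpha>1/2$, the sum is $n^{\alpha}\sum_\ell \ell^{-\alpha-1/2}(1+\log n)^{\nu_\lambda}$. The series converges, giving $C n^{\alpha}(\log n)^{\nu_\lambda}$.
\end{itemize}
These are exactly the three claimed bounds. This explains why the critical case carries exponent $1+\nu_\lambda$, rather than the $\nu_\lambda+\tfrac12$ of Lemma~\ref{lem:YBound}: we lose the square-root cancellation because the $Z_\ell$ are not martingale differences.

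The main obstacle is the first step. We must make sure that $Z_\ell$ is genuinely controlled by $(S_{\ell-1}-\omega_{\ell-1})\1_{\Ext^c_{\ell-1}}$ with the correct indicator. We must also ensure there is no division by $S_{\ell-1}=0$; the factor $\1_{\Ext^c_{\ell-1}}$ in \eqref{eq:DeltaFn} takes care of this. Finally, the bound $|AX|\le CS$ must be justified even though balls with activity zero may be present; this works because the corresponding columns of $A$ vanish. Everything after that is the routine summation above.
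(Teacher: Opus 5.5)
Your proof is correct and follows the paper's route. You show $\|Z_\ell\|_p\le C_p\ell^{-1/2}$ from Lemma~\ref{lem:Burk} and $\omega_{\ell-1}\ge c\ell$, then combine Minkowski's inequality with Lemma~\ref{lem:6.1} and sum in the three regimes of $\Re\lambda/\lambda_1$. The one small difference is that you bound $|\E[\Delta X_{\ell-1}\mid\mathcal{F}_{\ell-1}]|$ deterministically via $|AX|\le CS$, where the paper uses conditional Jensen together with \eqref{eq:ExpBoundDelta}; your explicit tracking of the shifted indices $\omega_{\ell-1}$, $S_{\ell-1}$, $\Ext^c_{\ell-1}$ is also cleaner than the paper's write-up.
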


\begin{proof}
First we show that 
\begin{equation}\label{eq:ZBound}
\left\lVert Z_n \right\rVert_p \leq \frac{C_p}{\sqrt{n}}.
\end{equation}
Since $\omega_n^{-1}(\omega_n - S_n)$ is $\mathcal{F}_n$-measurable, we apply \eqref{eq:ExpBoundDelta} and get
\begin{equation}\label{eq:ZProof1}
\E\left[ |Z_n|^p | \mathcal{F}_n\right] = \E\left[\left.\left(\frac{\omega_n - S_n}{\omega_n}\E\left[\Delta X_n | \mathcal{F}_{n}\right]\right)^p \right\rvert \mathcal{F}_n\right] \leq  \left(\frac{\omega_n - S_n}{\omega_n}\right)^p \E\left[ | \Delta X_n |^p | \mathcal{F}_{n}\right] \leq C_p\1_{\Ext^c_n}\left(\frac{\omega_n - S_n}{\omega_n}\right)^p.
\end{equation}
Then \eqref{eq:ZBound} follows by Lemma \ref{lem:Burk}, \eqref{eq:omegaDef}, and \eqref{eq:ZProof1}.

Let $\gamma := \text{Re} \lambda /b$. First suppose $\gamma < 1/2$. From \eqref{eq:ZBound}, Lemma \ref{lem:6.1}, and Minkowski's inequality, 
\begin{multline*}
\left\Vert \sum_{\ell=1}^n P_\lambda F_{\ell,n} Z_\ell \right \Vert_p \leq \sum_{\ell=1}^n ||P_\lambda F_{\ell,n}|| \, ||Z_\ell||_p \leq \frac{C_p}{\sqrt{n}}\sum_{\ell =1}^n \left(\frac{\ell}{n}\right)^{-\gamma-1/2}\left(1 + \log \frac{n}{\ell}\right)^{\nu_\lambda} \\
\leq C_p\sqrt{n} \int_0^1 x^{-\gamma-1/2}\left(1 + \log \frac{1}{x}\right)^{\nu_\lambda}dx = C_p\sqrt{n}.
\end{multline*}
Similarly if $\gamma = 1/2$, then 
\[ \left\Vert \sum_{\ell=1}^n P_\lambda F_{\ell,n} Z_\ell \right \Vert_p \leq C_pn^{1/2}\sum_{\ell =1}^n \ell^{-1}\left(1 + \log \frac{n}{\ell}\right)^{\nu_\lambda} \leq C_p n^{1/2}(\log{n})^{1 + \nu_\lambda}.\]
Finally, if $\gamma > 1/2$,
\[ \left\Vert \sum_{\ell=1}^n P_\lambda F_{\ell,n} Z_\ell \right \Vert_p \leq C_pn^{\gamma}\sum_{\ell=1}^n \ell^{-\gamma - 1/2}(\log n)^{\nu_\lambda}.\]
\end{proof}

\bigskip

\noindent{\bf Proof of Theorem \ref{thm:momentBounds}.} 
To start, we note as an easy application of Jensen's inequality that 
\[
\left\lVert \sum_{ \ell =1}^n P_\lambda F_{\ell,n} \E \left[ Y_\ell \1_{\Ext_n^c} \right]\right\Vert_p \leq \left\lVert \sum_{ \ell =1}^n P_\lambda F_{\ell,n} \E Y_\ell \right\Vert_p \leq \left\lVert \sum_{ \ell =1}^n P_\lambda F_{\ell,n} Y_\ell \right\Vert_p,
\]
which also holds with the $Y_\ell$ terms replaced by $Z_\ell$ by the same argument. We then apply \eqref{eq:ExtinctionProb} to bound $\E \1_{\Ext_n^c}$ uniformly in $n$ and have that 
\begin{equation}\label{eq:BoundConditionalYZ}
\left\lVert \sum_{ \ell =1}^n P_\lambda F_{\ell,n} \E_{\Ext_n^c} [Y_\ell] \right\Vert_p \leq C \left\lVert \sum_{ \ell =1}^n P_\lambda F_{\ell,n} Y_\ell \right\Vert_p \qquad \text{and} \qquad  \left\lVert \sum_{ \ell =1}^n P_\lambda F_{\ell,n} \E_{\Ext_n^c} [Z_\ell] \right\Vert_p \leq C \left\lVert \sum_{ \ell =1}^n P_\lambda F_{\ell,n} Z_\ell \right\Vert_p.
\end{equation}
Theorem \ref{thm:momentBounds}(i) now holds from \eqref{eq:ProjX-EXDecomp}, Minkowski's inequality, \eqref{eq:BoundConditionalYZ}, and applying Lemmas \ref{lem:YBound} and \ref{lem:ZBound}.\\

For Theorem \ref{thm:momentBounds}(ii), we apply Lemma \ref{lem:lambda1} which along with \eqref{eq:SDef} and \eqref{eq:omegaDef} yields
\begin{equation}\label{eq:4.1proof1} P_{\lambda_1}(X_n - \E_{\Ext_n^c} X_n) \1_{\Ext^c_n} = a \cdot (X_n - \E_{\Ext_n^c} X_n) v_1 \1_{\Ext^c_n} = (S_n - \E_{\Ext_n^c} S_n)v_1 \1_{\Ext^c_n} = (S_n - \omega_n)v_1 \1_{\Ext_n^c}.
\end{equation}
The result now follows from \eqref{eq:4.1proof1} and Lemma \ref{lem:Burk}. 
\qed

\section{Proof of main results}\label{sec:proofs}

\noindent {\bf Proof of Theorem \ref{thm:1}.}
First suppose $\Re \lambda_2 = \lambda_1 = b$. In this case the result follows from Minkowski's inequality, 
\[ \left\lVert X_n \right\rVert_p \leq \left\lVert X_0 \right\rVert_p + \sum_{k=0}^{n-1}\left\lVert \Delta X_k \right\rVert_p \leq C_p + C_p n \leq C_p n.\]
By Jensen's inequality and \eqref{eq:ExtinctionProb}, $\left\lVert \E \left[X_n | \Ext_n^c\right] \right\rVert_p \leq C \left\lVert \E X_n \1_{\Ext_n^c}\right\rVert \leq \left\lVert X_n \right\rVert_p$ and so Minkowski's inequality once more yields $\left\lVert (X_n - \E \left[X_n | \Ext_n^c\right])\1_{\Ext_n^c} \right\rVert_p \leq C_p n$. \\

Now assume that $\Re \lambda_2 < \lambda_1$, and so $\lambda_1$ is simple. From Theorem \ref{thm:momentBounds}(ii), we have that 
\[\left\lVert P_{\lambda_1}\left( X_n - \E \left[ X_n | \Ext_n^c \right] \right)\1_{\Ext_n^c} \right\rVert_p \leq C_p\sqrt{n}.\]
Thus from the decomposition \eqref{eq:ProjDec} and Minkowski's inequality, 
\[ \left\lVert\left( X_n - \E \left[ X_n | \Ext_n^c \right] \right) \1_{\Ext_n^c} \right\rVert_p \leq \sum_{\lambda \in \sigma(A)} \left\lVert P_\lambda\left(X_n - \E \left[ X_n | \Ext_n^c \right]\right)\1_{\Ext_n^c}  \right\rVert_p \leq C_p\sqrt{n} + \sum_{\lambda \neq \lambda_1}\left\lVert P_\lambda\left(X_n - \E \left[ X_n | \Ext_n^c \right]\right))\1_{\Ext_n^c}  \right\rVert_p\]
is dominated by the bound for $\lambda = \lambda_2$ from Theorem \ref{thm:momentBounds}(i), completing the proof. \qed

\bigskip

\noindent{\bf Proof of Theorem \ref{thm:MainThm}.}
As is well-known, for $1 \leq p_1 < p_2$, if non-negative random variables $M_1, M_2, \ldots$ satisfy $\sup_n \E M_n^{p_2} < \infty$, then $\{ M_n^{p_1}\}_{n=1}^\infty$ is uniformly integrable (see for example \cite[Theorem 5.4.2]{GUT}). Thus by assuming $\U{p}$ for all $p \geq 2$, Theorem \ref{thm:1} therefore implies that 
\[ \left\{ \left\lvert\frac{X_n - \E \left[ X_n | \Ext_n^c \right]}{\sqrt{n}}\right\rvert^p\1_{\Ext_n^c} \right\}_{n=2}^\infty\]
is uniformly integrable for all $p \geq 1$. The convergence in distribution \eqref{eq:CLT2} then implies the convergence of moments (see for example \cite[Theorem 5.5.9 (i)]{GUT}). \qed

\bigskip

\noindent{\bf Proof of Theorem \ref{thm:exp}.} Recalling the definition of $S_n$ from \eqref{eq:SDef}, then Lemma \ref{lem:lambda1} and \eqref{eq:omegaDef} imply
\begin{equation}\label{eq:expLambda1}
\E\left[P_{\lambda_1}X_n | \Ext_n^c\right] = \E\left[ (a \cdot X_n) v_1 | \Ext_n^c\right] = \E \left[ S_n | \Ext_n^c\right]v_1 = (\omega_0 + nb)v_1 = (\omega_0 + n\lambda_1)v_1.
\end{equation}
Let $\lambda \neq \lambda_1$, and so $\Re \lambda <  \lambda_1$. By the decomposition \eqref{eq:ProjXDecomp}, 
\begin{equation}\label{eq:expLambda}
\E\left[ P_\lambda X_n | \Ext_n^c\right] =  P_\lambda F_{0,n} X_0 + \sum_{\ell=1}^n P_\lambda F_{\ell,n} \E\left[Y_\ell | \Ext_n^c\right] + \sum_{\ell=1}^n P_\lambda F_{\ell,n} \E\left[Z_\ell| \Ext_n^c\right].
\end{equation}
From \eqref{eq:ProjNil} (and since $N_\lambda P_\lambda = N_\lambda$), and the definition of $F_{0,n}$ from \eqref{eq:FDef}, 
\[ P_\lambda F_{0,n} = P_\lambda\left(I + \omega_0^{-1}A\right)F_{1,n} = \left(P_\lambda + \omega_0^{-1}\left(\lambda P_\lambda + N_\lambda\right)\right) F_{1,n} = \left(I + \omega_0^{-1}(\lambda I + N_\lambda)\right)P_\lambda F_{1,n}.\]
Therefore, by Lemma \ref{lem:6.1}, 
\begin{equation}\label{eq:ExpLambdaX0}
\left\lvert P_\lambda F_{0,n} X_0 \right\rvert  \leq \left\lVert I + \omega_0^{-1}(\lambda I + N_\lambda)\right\rVert \, \left\lVert P_\lambda F_{1,n} \right\rVert \left\lvert X_0 \right\rvert \leq C n^{\Re \lambda / \lambda_1}\left(1 + \log n \right)^{\nu_\lambda}.
\end{equation}
Since we assume $\U{2}$ holds, an easy application of Jensen's inequality with \eqref{eq:ExtinctionProb} implies 
\begin{equation}\label{eq:ExpLambdaYZ}
\begin{aligned}
\left\lvert \sum_{\ell = 1}^n P_\lambda F_{\ell,n} \E\left[ Y_\ell | \Ext_n^c\right]\right\rvert &\leq C \left\lVert \sum_{\ell = 1}^n P_\lambda F_{\ell,n} Y_\ell\right\rVert_2, \\ \\
\left\lvert \sum_{\ell = 1}^n P_\lambda F_{\ell,n} \E\left[ Z_\ell | \Ext_n^c\right]\right\rvert &\leq C \left\lVert \sum_{\ell = 1}^n P_\lambda F_{\ell,n} Z_\ell\right\rVert_2.
\end{aligned}
\end{equation}
From the decomposition \eqref{eq:ProjDec}, 
\[ \E\left[ X_n | \Ext_n^c\right] = \E \left[ P_{\lambda_1} X_n | \Ext_n^c\right] + \sum_{\lambda \neq \lambda_1} \E \left[ P_\lambda X_n | \Ext_n^c\right],\]
and so the result follows from \eqref{eq:expLambda1} and \eqref{eq:expLambda}, along with \eqref{eq:ExpLambdaX0}, \eqref{eq:ExpLambdaYZ}, and Lemmas \ref{lem:YBound} and \ref{lem:ZBound}.
\qed

\bigskip

\noindent{\bf Proof of Theorem \ref{prop}.} By Theorem \ref{thm:1}, Theorem \ref{thm:exp}, and Minkowski's inequality, 
\[ \left\lVert \left(X_n - n\lambda_1 v_1\right)\1_{\Ext_n^c} \right\rVert_2 \leq \left\lVert \left(X_n - \E \left[X_n | \Ext_n^c\right]\right)\1_{\Ext_n^c}\right\rVert_2 + \left\lvert \left(\E\left[ X_n | \Ext_n^c\right] - n\lambda_1 v_1 \right)\1_{\Ext_n^c}\right\rvert \leq C \sqrt{n}\]
uniformly for all $n$, which implies that the sequence 
\[ \left\{ \left\lvert \frac{ X_n - n\lambda_1 v_1}{\sqrt{n}}\right\rvert \1_{\Ext_n^c}\right\}_{n=0}^{\infty}\]
is uniformly integrable (once more see \cite[Theorem 5.5.7]{GUT}). This uniform integrability along with the assumed convergence in distribution conditioned on non-extinction \eqref{eq:CLT1} implies 
\[ \E\left[\left. \frac{ X_n - n\lambda_1 v_1}{\sqrt{n}}\right\rvert \Ext_n^c\right] \xrightarrow{n \to \infty} \E\, \mathcal{N}(0, \Sigma) = 0\]
(see for example \cite[Theorem 5.5.9]{GUT}), which therefore implies 
\[ \E \left[X_n | \Ext_n^c\right] = n\lambda_1 v_1 + o(\sqrt{n})\]
and so conditioned on non-extinction,
\[ \frac{X_n - \E \left[X_n| \Ext_n^c\right]}{\sqrt{n}} = \frac{X_n - n \lambda_1 v_1}{\sqrt{n}} - \frac{\E\left[ X_n | \Ext_n^c\right] - n\lambda_1 v_1}{\sqrt{n}} \xrightarrow{d} \mathcal{N}(0, \Sigma) + 0.\]
\qed

\section{Applications}\label{sec:applications}

As mentioned in the introduction, generalized P\'olya urns have long been used to prove normal limit laws for the degree distributions of increasing trees, and in most cases, the associated urns are balanced and so also admit convergence of all moments thanks to previous results on balanced urns. Here we highlight two examples where we may apply our results to prove new results for the convergence of moments for degree distributions of certain recent random graph models similar to increasing trees. 

\subsection{Uniform attachment with freezing}\label{sec:freezing}

Uniform attachment trees with freezing are a class of random trees of recent interest (see for example \cite{BBKK:25, BBKK2:25, BBCKK:25, KAKS:25}). In the general case, we are given an infinite sequence $\bm{x} = (x_i)_{i=1}^\infty$ where $x_i \in \{-1,1\}$. The initial tree $\mathcal{T}_0(\bm{x})$ consists of a single vertex with the label $a$ (for {\em active}). At each step $n \geq 1$, the tree $\mathcal{T}_n(\bm{x})$ is constructed from $\mathcal{T}_{n-1}(\bm{x})$ as follows: if there are no active vertices (labelled $a$) in $\mathcal{T}_{n-1}(\bm{x})$, then $\mathcal{T}_n(\bm{x}) = \mathcal{T}_{n-1}(\bm{x})$. Otherwise, a vertex $v_n$ is selected uniformly at random among the vertices with label $a$. If $x_n = -1$, then $v_n$ becomes {\em frozen} and is given the label $n$, and if $x_n = +1$, then a edge is drawn from $v_n$ to a new active vertex with label $a$. 

When $x_i = +1$ for all $i \geq 1$, the trees $\mathcal{T}_n(\bm{x})$ are distributed as random recursive trees. Here we consider the case where the $x_i$ are i.i.d. with $\P(x_i = +1) = p$ and $\P(x_i = -1) = 1-p$. As previously shown \cite[Theorem 2]{BBKK:25}, in this case the limiting tree $\mathcal{T}_{\infty}(\bm{x})$ is distributed as a Bienaym\'e tree with Geometric $\text{Ge}(1-p)$ offspring distribution. 

\bigskip

\noindent{\bf A P\'olya urn for the vertex degrees.} For any fixed $K \geq 1$, we construct an urn where balls represent vertices with outdegree $0, \ldots, K-1$; the outdegree of a vertex is the number of children. Our urn has $2K+2$ types of balls; for $m=0, \ldots, K-1$, balls of type $2m+1$ represent active vertices with outdegree $m$ and assigned the activity $a_{2m+1} = 1$, and balls of type $2m+2$ represent frozen vertices with outdegree $m$ and assigned the activity $a_{2m+2} = 0$. The balls of type $2K+1$ represent active vertices with outdegree $m \geq k$, and given the activity $a_{2K+1} = 1$, while frozen vertices with outdegree $m \geq K$ are represented by balls of type $2K+2$ and have activity $a_{2K+2} = 0$. 

For $m= 0, \ldots, K-1$, if an active vertex $v_n$ with degree $m$ is selected in the tree at step $n-1$, then it either becomes frozen with probability $1-p$ (if $x_n = -1$), or with probability $p$ a new active vertex with outdegree 0 is attached and the outdegree of the vertex is increased by one (if $x_n = 1$). As a consequence, we see that 
\begin{equation}\label{eq:freezingXi}
\text{for } m=0,\ldots,K-1, \qquad \E\xi_{2m+1,j} = \begin{cases}
p & m\neq 0, j=0, \\
-1 & m\neq 0, j=2m+1, \\
p-1 & m=0, j=0, \\
1-p & j = 2m+2, \\
p & j=2m+3.
\end{cases}
\end{equation}
As for active vertices with outdegree $m \geq K$, with probability $1-p$ the vertex becomes frozen (if $x_n = -1$), and with probability $p$ the vertex remains active and a new active vertex with outdegree 0 is added to the tree (if $x_n = +1$). Therefore, 
\begin{equation}\label{eq:freezingXiSpecial}
\E \xi_{2K+1} = \left( \begin{array}{c}
p \\ 0 \\ \vdots \\ 0 \\ -(1-p) \\ 1-p
\end{array}\right).
\end{equation}

\bigskip

\noindent{\bf Normal limit law and convergence of moments.} With the P\'olya urn setup described above, the vector $X_n = (X_{n,1}, \ldots, X_{n,2K+1})^T$ is defined so that $X_{n,2m+1}$ is the number of active vertices in $\mathcal{T}_n(\bm{x})$ with outdegree $m = 0, \ldots, K-1$, $X_{2m+2}$ is the number of frozen vertices with outdegree $m = 0, \ldots, K-1$, and $X_{2K+1}$ is the number of active vertices with outdegree $m \geq K$. Note that as defined, we have that the total activity of the urn is given by 
\[ S_n := \sum_{j=1}^{2K+2} a_j \cdot X_{n,j} = \sum_{j=0}^K X_{n,2j+1} = \max\left(0, 1 + \sum_{i=1}^n x_i\right).\]
We therefore see that the event of non-extinction up to time $n$ is given by 
\[ \mathcal{E}^c_n := \left\{ 1 + \sum_{i=1}^k x_i > 0 \text{ for all } 0 \leq k \leq n\right\}.\]
We can now prove the following result:

\begin{proposition}
Fix $K \geq 1$. Let $\mathcal{T}_0(\bm{x}), \mathcal{T}_1(\bm{x}), \ldots, \mathcal{T}_n(\bm{x}), \ldots,$ be a sequence of uniform attachment trees with freezing, where $\bm{x} = (x_i)_{i=1}^\infty$ is given by i.i.d. distributed $x_i$ with $\P(x_i = +1) =p$ and $\P(x_i = -1) = 1-p$, and let $X_n$ be defined as above. For all $p \in (1/2, 1]$, conditioned on non-extinction, 
\begin{equation}\label{eq:frozCLT}
\frac{X_n - \E[X_n | \mathcal{E}_n^c]}{\sqrt{n}} \xrightarrow{d} \N(0, \Sigma)
\end{equation}
for some covariance matrix $\Sigma$, with convergence of all conditional moments holding as well. Furthermore, conditioned on non-extinction, 
\[ \frac{X_n}{n} \xrightarrow{a.s.} (2p-1)v_1,\]
where $v_1$ is given by
\[ v_1 = \left(
\frac{1}{2} , \frac{1-p}{2(2p-1)} , \frac{1}{4} , \frac{1-p}{4(2p-1)} , \ldots , \frac{1}{2^K} , \frac{1-p}{2^K(2p-1)} , \frac{1}{2^K}, \frac{1-p}{2^K(2p-1)}\right)^T.\]
\end{proposition}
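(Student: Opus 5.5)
The plan is to verify the hypotheses of Theorems \ref{thm:MainThm} and \ref{prop} for the urn just described, and then to invoke \cite[Theorems 3.21 \& 3.22]{JANS:04} for the almost sure convergence and for the distributional limit \eqref{eq:CLT1}.

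\textbf{Step 1: $\U{p}$ for all $p$.} Every replacement vector $\xi_i$ takes finitely many values with entries in $\{-1,0,1\}$, so $\lVert \xi_i\rVert_p<\infty$ for all $p$. Tenability holds because only the drawn active ball is ever removed, and all other entries are nonnegative. For every active type $i$, \eqref{eq:freezingXi} and \eqref{eq:freezingXiSpecial} give
\[ \sum_j a_j\E\xi_{i,j} = p - (1-p) = 2p-1 =: b. \]
Indeed, with probability $p$ the active count is unchanged in sum (one active ball moves up a degree class and a new outdegree-$0$ active ball appears), and with probability $1-p$ one active ball is lost. So the urn is balanced in expectation with $b>0$ exactly when $p>1/2$. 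This is consistent with $S_n=\max(0,1+\sum x_i)$.

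\textbf{Step 2: the spectrum of $A$.} The columns of frozen types are zero, so those types contribute eigenvalue $0$. Let $u_0,\dots,u_K$ be the active coordinates of an eigenvector for eigenvalue $\lambda$. The rows for active types $2m+3$ with $m+1\le K-1$ give
\[ u_{m+1}=\frac{p\,u_m}{\lambda+1}. \]
The row for type $2K+1$ gives $p u_{K-1}=(\lambda+1-p)u_K$. The row for type $1$ gives $p\sum_m u_m=(\lambda+1)u_0$.

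Since $a^TA=b\,a^T$, summing the active coordinates shows that either $\lambda=b$ or $\sum u_m=0$. In the latter case the active block, modulo the invariant direction, is triangular with diagonal entries $-1$ (for $m=1,\dots,K-1$) and $-(1-p)$. Hence every eigenvalue other than $\lambda_1=b=2p-1$ lies in $\{0,-1,-(1-p)\}$.

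All of these have real part at most $0<(2p-1)/2$. So $\lambda_1$ is simple and the urn is strictly small, even allowing for the nontrivial Jordan blocks that the chain structure creates at $-1$ and $0$. The vector $v_1$ is then found by solving $Av_1=bv_1$ with $a\cdot v_1=1$. The recursion above with $\lambda+1=2p$ gives $u_{m+1}=u_m/2$. The frozen coordinates satisfy $b\,x_{2m+2}=(1-p)u_m$, which yields the factor $\frac{1-p}{2p-1}$. Normalising the active sum to $1$ gives the stated $v_1$; this is a routine check.

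\textbf{Step 3: limit theorems.} I expect this to be the main obstacle, namely checking the assumptions (A1)--(A6) of \cite{JANS:04}, since $A$ is not irreducible. I would first observe that the active types form a single communicating class that is dominating: type $1$ is reached from every active type, and every active type is reached from type $1$ by successive increments. The frozen types are absorbing, have activity $0$, and are reached only from active types.

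Janson's framework allows such a reducible $A$ provided $\lambda_1$ comes from the dominating class, $\lambda_1>0$ is simple, and the urn survives with positive probability. Survival holds by \eqref{eq:ExtinctionProb}, or directly because the random walk $1+\sum x_i$ has positive drift. With these checked, \cite[Theorem 3.21]{JANS:04} gives $X_n/n\to\lambda_1v_1=(2p-1)v_1$ almost surely on non-extinction, and \cite[Theorem 3.22]{JANS:04} gives \eqref{eq:CLT1} conditioned on non-extinction. The care needed here is that the \cite[Lemma 2.1]{JANS:04} route fails because of extinction (Remark \ref{rmk:5.4}), so the conditions must be verified directly.

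Finally, Theorem \ref{prop} converts \eqref{eq:CLT1} into \eqref{eq:frozCLT}. Theorem \ref{thm:MainThm}, applicable since $\U{p}$ holds for every $p$ and the urn is strictly small, then yields convergence of all conditional moments.
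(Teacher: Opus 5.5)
Your overall route is the paper's. You find the spectrum of $A$ and $v_1$, verify (A1)--(A6) of \cite{JANS:04} to get the almost sure limit and \eqref{eq:CLT1} from \cite[Theorems 3.21 \& 3.22]{JANS:04}, pass to \eqref{eq:frozCLT} via Theorem \ref{prop}, and get moment convergence from Theorem \ref{thm:MainThm}. Your account of the reducible structure (active types forming the dominating class that carries $\lambda_1$, frozen types with zero activity) is more explicit than the paper's ``easy to verify''.

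The only place you differ is the spectral step. The paper row/column-reduces $A-\lambda I$ to obtain the full characteristic polynomial $-\lambda^{K+1}(\lambda+1)^K(\lambda+1-2p)$. You instead use the invariant hyperplane $\ker a^T$ together with the eigenvector recursion. That is a valid and lighter route, but the sentence that carries it is false. The active block restricted to $\{\sum_m u_m=0\}$ is not triangular with diagonal $-1,\dots,-1,-(1-p)$. Its characteristic polynomial is $(\lambda+1)^K$, so $-(1-p)$ does not occur there. Already for $K=1$ the active block is $\begin{pmatrix} p-1 & p\\ p & p-1\end{pmatrix}$, with eigenvalues $2p-1$ and $-1$, and $(1,-1)^T$ is an eigenvector for $-1$, not for $-(1-p)$.

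Since you only need an inclusion, this is easily repaired with your own equations. Suppose $Av=\lambda v$ and $\sum_m u_m=0$. The type-$1$ row gives $(\lambda+1)u_0=0$. If $\lambda\neq-1$, then $u_0=0$, the recursion forces $u_1=\dots=u_{K-1}=0$, and $\sum_m u_m=0$ forces $u_K=0$. The active part of $v$ then vanishes, so $Av=0$ and $\lambda=0$. Hence $\sigma(A|_{\ker a^T})\subseteq\{0,-1\}$.

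You should then state explicitly that $\det(\lambda I-A)=(\lambda-b)\det(\lambda I-A|_{\ker a^T})$. This is what turns the eigenvector analysis into \emph{algebraic} simplicity of $b$, which listing eigenvalues via eigenvectors alone does not give. It also gives $\Re\lambda\le 0<b/2$ for every other eigenvalue, so the urn is strictly small.

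Two smaller slips. With probability $p$ the number of active balls goes up by one rather than staying unchanged; your formula $p-(1-p)$ already uses the correct count. And what rules out \cite[Lemma 2.1]{JANS:04} is the reducibility of $A$, not extinction: by Remark \ref{rmk:5.4}, the (A1)--(A6) part of that lemma is correct for irreducible $A$.
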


\begin{remark}\label{rmk:degreeFrozen}
Since multivariate normal limit laws hold if and only if normal limit laws hold for all linear combinations of the components, we see immediately that if we let 
\[ Y_n = \left( \begin{array}{c} X_{n,0} + X_{n,1} \\ X_{n,2} + X_{n,3} \\ \vdots \\ X_{n,2K-1} + X_{n,2K} \end{array}\right),\]
 so that $Y_{n,m}$ is the number of vertices (both active and frozen) of outdegree $m+1$, then the convergence 
\begin{equation}\label{eq:rmkY}
\frac{Y_n - \E[Y_n | \mathcal{E}_n^c]}{\sqrt{n}} \xrightarrow{d} \N(0,\Sigma')
\end{equation}
also holds conditioned on non-extinction, along with the convergence of the conditional moments, for the appropriate covariance matrix $\Sigma'$. The joint convergence of infinitely many random variables is by definition the same as joint convergence of any finite subset. Since \eqref{eq:frozCLT} and \eqref{eq:rmkY} hold for any arbitrary $K$, we therefore also have a joint normal limit law for the infinite vector of all degrees in $\mathcal{T}_n(\bm{x})$. 
\end{remark}

\begin{proof}
From \eqref{eq:freezingXi} and \eqref{eq:freezingXiSpecial} along with the activities $a_{2m+1} = 1$ and $a_{2m+2} = 0$, the intensity matrix of the urn is given by 
\[ 
A = \left( \begin{array}{ccccccccc}
p-1 & 0 & p & 0 & \cdots & p & 0 & p & 0 \\
1-p & 0 & 0 & 0 & \cdots & 0 & 0 & 0 & 0\\
p & 0 & -1 & 0 & \cdots & 0 & 0 & 0 & 0\\
0 & 0 & 1-p& 0 & \cdots & 0 & 0 & 0 & 0\\
0 & 0 & p & 0 & \cdots & 0 & 0 & 0 & 0\\
\vdots & \vdots & \vdots & \vdots & \ddots & \vdots & \vdots & \vdots & \vdots\\
0 & 0 & 0 & 0 & \cdots & -1 & 0 & 0 & 0\\
0 & 0 & 0 & 0 & \cdots & 1-p& 0 & 0 & 0\\
0 & 0 & 0 & 0 & \cdots & p & 0 & -(1-p) & 0 \\
0 & 0 & 0 & 0 & \cdots & 0 & 0 & 1-p & 0
\end{array}\right)
\]

To find the eigenvalues of $A$, we use the same procedure as \cite[Lemma 3.2]{DEHO:20}; we perform row and column operations to $A - \lambda I$. For all $i=1,\ldots,2K$, add $a_i$ times row $i$ to row $2K+1$, and afterwards, subtract column $2K+1$ from columns $j=1,\ldots,2K$. The resulting matrix is given by 
\[ A''(\lambda) = \left( \begin{array}{ccccccccc}
-1-\lambda & 0 & 0 & 0 & \cdots & 0 & 0 & p & 0\\
1-p & -\lambda & 0 & 0 & \cdots & 0 & 0 & 0& 0 \\
p & 0 & -1-\lambda & 0 & \cdots & 0 & 0 & 0 & 0\\
0 & 0 & 1-p& -\lambda & \cdots & 0 & 0 & 0 & 0\\
0 & 0 & p & 0 & \cdots & 0 & 0 & 0 & 0\\
\vdots & \vdots & \vdots & \vdots & \ddots & \vdots & \vdots & \vdots & \vdots\\
0 & 0 & 0 & 0 & \cdots & -1 - \lambda & 0 & 0 & 0\\
0 & 0 & 0 & 0 & \cdots & 1-p& -\lambda & 0 & 0\\
0 & 0 & 0 & 0 & \cdots & 0 & 0 & -(1-2p) - \lambda & 0 \\
0 & 0 & 0 & 0 & \cdots & 0 & 0 & 1-p & -\lambda
\end{array}\right).\]
Determinants are unchanged after adding/subtracting a row/column to another. Thus we can read off the characteristic polynomial of $A$ by finding the determinant of $A''(\lambda)$, which by expanding along the last columns and then row $2K-1$, gives 
\[ -\lambda^{K+1}(\lambda + 1)^K(\lambda +1 - 2p).\]
The eigenvalues of $A$ are then given by $\lambda_1 = 2p-1 > 0$ along with $0$ and $-1$.

For the eigenvector $v_1$, we solve the system 
\[ Av_1 = (2p-1)v_1, \qquad \text{and} \qquad a \cdot v_1 = 1.\]
To start we see that 
\begin{equation}\label{eq:v11} -v_{1,1} + p\sum_{m=0}^{K}v_{1,2K+1} = (2p-1)v_{1,1} \Longrightarrow 2p v_{1,1} = p(a \cdot v_1) \Longrightarrow v_{1,1} = \frac{1}{2}.
\end{equation}
Then for $m=1,\ldots,K-1$, 
\begin{equation}\label{eq:v12m+1}
pv_{1,2m-1} - v_{1,2m+1} = (1-2p)v_{1,2m+1} \Longrightarrow v_{1,2m+1} = \frac{v_{1,2m-1}}{2},
\end{equation}
and so from \eqref{eq:v11} and \eqref{eq:v12m+1}, 
\[ \text{for } m=0,\ldots,K-1, \qquad v_{1,2m+1} = \frac{1}{2^{m+1}}.\]
We also see that 
\[ p\, v_{1,2K-1}-(1-p)v_{1,2K+1} = (2p-1)v_{1,2K+1} \Longrightarrow v_{1,2K+1} = v_{1,2K-1} = \frac{1}{2^K}.\]
Finally, for $m=0,\ldots,K$, 
\[ (1-p)v_{1,2m+1} = (1-2p)v_{1,2m+2} \Longrightarrow v_{1,2m+2} = \frac{(1-p)v_{1,2m+1}}{2p-1} = \frac{1-p}{(2p-1)2^{m+1}}.\]

It is easy to verify that our urn satisfies the assumptions (A1) -- (A6) of \cite{JANS:04}, and so by \cite[Theorem 3.21 \& Theorem 3.22]{JANS:04}, conditioned on non-extinction, 
\[ \frac{X_n}{n} \xrightarrow{a.s.} \lambda_1 v_1, \qquad \text{and} \qquad \frac{X_n - n\lambda_1 v_1}{\sqrt{n}} \xrightarrow{d} \N(0,\Sigma).\]
Next we see that $\U{p}$ holds for all $p \geq 2$. Thus by Theorem \ref{thm:exp} and Theorem \ref{prop}, conditioned on non-extinction, 
\[ \frac{X_n - \E[X_n | \mathcal{E}_n^c]}{\sqrt{n}} \xrightarrow{d} \N(0,\Sigma),\]
with the convergence of all conditional moments holding by Theorem \ref{thm:MainThm}.
\end{proof}

\subsection{Hooking networks}\label{sec:hooking}

Let $\chi \geq 0$ and $\rho \in \R$ be two fixed parameters so that $\chi + \rho > 0$, and let $\mathcal{C} = \{G_1, G_2, \ldots, G_m\}$ be a collection of graphs, called {\em blocks}, each with a vertex labelled as the {\em hook} $h_i$ of $G_i$, and each assigned a value $p_i$ so that $p_1 + p_2 + \ldots + p_m = 1$. {\em Hooking networks} are defined as a sequence of random graphs $\mathcal{G}_0, \mathcal{G}_1, \ldots, \mathcal{G}_n, \ldots$ constructed recursively as follows. Starting with $\mathcal{G}_0$ consisting of a single vertex labelled $H$ called the {\em master hook}, $\mathcal{G}_n$ is constructed from $\mathcal{G}_{n-1}$ by first sampling a vertex $v_n$ called a {\em latch} from $\mathcal{G}_{n-1}$, where the probability of selecting $v_n$ is given by 
\[ \frac{\chi \deg(v_n) + \rho}{\sum_{u \in V(\mathcal{G}_{n-1})} \chi \deg(u) + \rho}.\]
Then a block $B_n$ is sampled as a copy of one of the graphs in $\mathcal{C}$, where the probability that $B_n \simeq G_i$ is $p_i$. The hook of $B_n$ is then fused with the latch $v_n$. Recently, results on the degree distributions of hooking networks have been proved, \cite{DEHO:20, MAHM:19}, as have results on the insertion depth; the distance from the block $B_n$ to the master hook $H$ \cite{DESM:24, DEMA:22}. 

An integer $k$ is called an {\em essential} degree if there is a positive probability that at least two vertices have degree $k$ in some $\mathcal{G}_n$; it is shown that only the master hook $H$ can have a non-essential degree \cite[Proposition 3]{DEHO:20}. For a positive integer $r$, let $k_1, \ldots, k_r$ be the $r$ smallest essential degrees that can appear in the sequence of hooking networks, and let $X_n = (X_{n,1}, \ldots, X_{n,r})$, where $X_{n,i}$ is the number of vertices with degree $k_i$ in $\mathcal{G}_n$. 

The degree distribution of hooking networks constructed from a single graph in the collection $\mathcal{C} = \{ G\}$, called {\em self-similar hooking networks}, were studied in \cite{MAHM:19}. The general case was later studied in \cite[Theorem 4]{DEHO:20}, where it was shown that 
\begin{equation}\label{eq:CLTHooking}
\frac{X_n - n\lambda_1 \nu}{\sqrt{n}} \xrightarrow{d} \N(0,\Sigma)
\end{equation}
for $\lambda_1$ and $\nu$ given explicitly in \cite[Section 1.3.1]{DEHO:20}. The `balanced' case was also examined explicitly, where conditions were given to guarantee convergence of moments \cite[Corollary 6]{DEHO:20}. In light of our results in Section \ref{sec:results}, convergence of moments can now be guaranteed for any finite collection of graphs $\mathcal{C}$ to construct hooking networks. 
\begin{proposition}
Let $\mathcal{G}_0, \mathcal{G}_1, \ldots,$ be a sequence of hooking networks constructed as above, and let $X_n = (X_{n,1}, \ldots, X_{n,r})$ be the vector of $r$ smallest essential degrees. The convergence \eqref{eq:CLTHooking} holds with convergence of all moments. Also, $\E X_n = n\lambda_1 \nu + o(\sqrt{n})$, and so $n \lambda_1 \nu$ can be relpaced by $\E X_n$ in \eqref{eq:CLTHooking}.
\end{proposition}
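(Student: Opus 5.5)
The plan is to encode the degree counts of the hooking network as a generalized P\'olya urn that is balanced in expectation. Then, as in the proof of the previous proposition, we combine the known central limit theorem \eqref{eq:CLTHooking} from \cite[Theorem 4]{DEHO:20} with Theorem \ref{prop} and Theorem \ref{thm:MainThm}.

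\textbf{Urn construction.} Following \cite{DEHO:20}, use one type of ball for each of the degrees $k_1,\ldots,k_r$. Add one further type for vertices of essential degree larger than $k_r$, and if needed a type for the master hook. A ball representing degree $k$ gets activity $\chi k+\rho$. When a latch of degree $k$ is chosen and a block $G_i$ is fused at it, the latch moves to degree $k+\deg(h_i)$. The non-hook vertices of $G_i$ contribute balls according to their degrees in $G_i$.

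The replacement vector depends only on the latch's type and on the independently sampled block. Since $\mathcal{C}$ is finite, each $\xi_i$ is bounded, so $\left\lVert\xi_i\right\rVert_p<\infty$ for all $p$. The change in total activity when $G_i$ is attached is $\chi\cdot 2|E(G_i)|+\rho(|V(G_i)|-1)$, which does not depend on the latch. Hence
\[
b=\sum_i p_i\bigl(2\chi|E(G_i)|+\rho(|V(G_i)|-1)\bigr),
\]
and \eqref{eq:iidBalancedDef} holds. The hypothesis $\chi+\rho>0$, together with the nonnegativity of activities that hooking networks implicitly require, should give $b>0$ and tenability, since balls are only moved between types and never removed from nonexistence. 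The master hook may carry a non-essential degree, but it is a single ball. It can either be absorbed into the overflow type or treated as a bounded perturbation of $X_n$ that is irrelevant at scale $\sqrt n$. Here there is no extinction: $S_n>0$ for all $n$, so $\Ext_n^c$ is the whole space and all conditional statements become unconditional. Thus $\U{p}$ holds for all $p\ge2$.

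\textbf{Spectral input.} Next we need $\lambda_1=b>\Re\lambda$ for the other eigenvalues and strict smallness. The analysis of \cite[Section 1.3.1]{DEHO:20} computes the spectrum by the same row and column reduction used in the freezing example. The intensity matrix is triangular apart from the overflow column, so its eigenvalues are $b$ together with the diagonal entries $-(\chi k_j+\rho)$ (and possibly $0$). These have real part $\le 0<b/2$, so the urn is strictly small. Moreover the limit $n\lambda_1\nu$ in \eqref{eq:CLTHooking} coincides with $n\lambda_1 v_1$ from Lemma \ref{lem:lambda1}, restricted to the first $r$ coordinates.

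\textbf{Conclusion.} Apply Theorem \ref{prop} to the full urn vector to get $\E X_n=n\lambda_1 v_1+o(\sqrt n)$ and the centred normal limit. Then Theorem \ref{thm:MainThm} gives convergence of all moments. Projecting onto the first $r$ coordinates is a linear map, so it preserves both the distributional limit and the moment convergence. This yields the stated result for $X_n$ and justifies replacing $n\lambda_1\nu$ by $\E X_n$.

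The main obstacle I expect is the bookkeeping in the urn construction, rather than the probabilistic part. The urn must be genuinely tenable with nonnegative activities, and \eqref{eq:CLTHooking} from \cite{DEHO:20} must be stated for the same urn vector, including the handling of the master hook and of negative $\rho$. My first approach would be to reuse the exact urn of \cite{DEHO:20} and check \eqref{eq:iidBalancedDef} directly from its replacement matrix.
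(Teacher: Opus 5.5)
Your proposal is correct and follows the paper's own proof: take the urn of \cite{DEHO:20}, use that it is balanced in expectation and that finiteness of $\mathcal{C}$ gives $\U{p}$ for every $p\ge 2$, and then combine \eqref{eq:CLTHooking} with Theorems \ref{prop} and \ref{thm:MainThm} (the paper also cites Theorem \ref{thm:exp}). Your explicit checks of non-extinction and strict smallness are left implicit in the paper, and one of them needs a small correction: the non-hook vertices of the block are added whatever the latch's type, so the intensity matrix is lower triangular plus a rank-one term $c\,a^T$, not ``triangular apart from the overflow column'', and its diagonal entries are not $-(\chi k_j+\rho)$. The spectrum is nevertheless still $b$ together with the values $-(\chi k_j+\rho)$, because on the $A$-invariant hyperplane $\ker a^T$ the matrix $A$ agrees with the triangular part, so your strict-smallness conclusion stands.
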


\begin{proof}
It is shown in \cite[Section 3.1.1]{DEHO:20} that $X_n$ can be described as an urn process, and as is shown in \cite[Remark 20]{DEHO:20}, the urn is also balanced in expectation. Since the collection $\mathcal{C}$ is finite, it is also evident that $\U{p}$ holds for all $p \geq 2$. The proposition then follows from \eqref{eq:CLTHooking}, Theorem \ref{thm:exp}, Theorem \ref{prop}, and Theorem \ref{thm:MainThm}.
\end{proof}

\begin{remark}
It is also stated in \cite[Remark 20]{DEHO:20} that the covariance matrix $\Sigma$ can be calculated via the simpler form given by \cite[Lemma 5.4]{JANS:04}; however, as discussed in Remark \ref{rmk:5.4}, this statement does not hold in the general case, but does hold in the `balanced' case satisfying the conditions of \cite[Corollary 6]{DEHO:20}.
\end{remark}

\end{document}